\documentclass[a4paper, 14pt]{amsart}

\usepackage[margin=1.15in]{geometry}

\usepackage{amscd,amssymb, amsmath, wasysym, mathrsfs, mathtools, bbold}
\usepackage{graphicx}
\usepackage[all, cmtip]{xy}

\usepackage{url}
\usepackage{hyperref}

\theoremstyle{plain}
\newtheorem{theorem}{Theorem}[section]
\newtheorem{prop}[theorem]{Proposition}

\newtheorem*{claim}{Claim}
\newtheorem*{convention}{Convention}

\newtheorem{cor}[theorem]{Corollary}

\newtheorem{lemma}[theorem]{Lemma}

\theoremstyle{definition}

\newtheorem{defn}[theorem]{Definition}
\newtheorem{rmk}[theorem]{Remark}

\newtheorem*{ex*}{Example}

\newcommand\qq{{\mathbb{Q}}}
\newcommand\zz{{\mathbb{Z}}}
\newcommand\rr{{\mathbb{R}}}
\newcommand\cc{{\mathbb{C}}}

\newcommand\nn{{\mathbb{N}}}

\newcommand\hh{{\mathbb{H}}}

\newcommand\m{\textrm{Char}}

\newcommand\Ya{Y_{\rm{alg}}}
\newcommand\Xa{X_{\rm{alg}}}

\newcommand\Ta{T_{\rm{alg}}}



\DeclareMathOperator{\id}{id}                    

\DeclareMathOperator{\im}{Im}

\DeclareMathOperator{\pic}{Pic}

\DeclareMathOperator{\homo}{Hom}

\DeclareMathOperator{\alb}{Alb}

\title[Cohomology jump loci]{Torsion points on the cohomology jump loci of compact K\"ahler manifolds}
\begin{document}
\author{Botong Wang}
\address{Department of Mathematics \\ KU Leuven \\ Celestijnenlaan 200b, 3001 Leuven, Belgium}
  \email{botong.wang@kuleuven.be}
  
\date{}

\begin{abstract}
We prove that each irreducible component of the cohomology jump loci of rank one local systems over a compact K\"ahler manifold contains at least one torsion point. This generalizes a theorem of Simpson for smooth complex projective varieties. An immediate consequence is the positive answer to a conjecture of Beauville and Catanese for compact K\"ahler manifolds. We also provide an example of a compact K\"ahler manifold, whose cohomology jump loci can not be realized by any smooth complex projective variety. 
\end{abstract}
\maketitle
\section{Introduction}
The cohomology jump loci of a topological space $X$ capture the geometry of $X$. There is a series of results in the rank one case obtained by Beauville, Green-Lazarsfeld, Arapura, Simpson and many others. In this note, we generalize a result of Simpson about smooth complex projective varieties to compact K\"ahler manifolds. 

We recall some definitions first. Let $X$ be a topological space, which is homotopy equivalent to a finite CW complex. Define $\m(X)=\homo(\pi_1(X), \cc^*)$ to be the variety of rank one characters of $\pi_1(X)$. For each point $\rho\in \m(X)$, there exists a unique rank one local system $L_\rho$, whose monodromy representation is isomorphic to $\rho$. We can also regard $\m(X)$ as the moduli space of rank one local systems on $X$. $\m(X)$ is determined by the homology group $H_1(X, \zz)$, and $\m(X)$ is isomorphic to the direct product of $(\cc^*)^{b_1(X)}$ and a finite abelian group. We generalize some definitions from \cite{sc} as follows. 

\begin{defn}
Let $X$ be any compact K\"abler manifold, and let $a: X\to \alb(X)$ be its Albanese map. Given any  morphism of complex tori $f: \alb(X)\to T$, denote the composition $X\stackrel{a}{\longrightarrow}\alb(X)\stackrel{f}{\longrightarrow} T$ by $g$. Let $g^\star: \m(T)\to \m(X)$ be the map induced by $g_*: \pi_1(X)\to \pi_1(T)$. Then $\im (g^\star)$ is an algebraic subgroup in $\m(X)$. A \textbf{linear subvariety} of $\m(X)$ is of the form $\rho\cdot \im(g^\star)$, for some $g$ as above and some $\rho\in \m(X)$. Such a linear subvariety of $\m(X)$ is called \textbf{arithmetic}, if $\rho$ can be chosen to be a torsion point in $\m(X)$. 
\end{defn}

\begin{convention}
In contrast to \cite{bw}, here by a complex torus, we always mean a connected compact complex Lie group. Its underlying topological space is a real torus. 
\end{convention}

\begin{rmk}
The notion of a linear subvariety is slightly stronger than a translate of an algebraic subgroup. For example, when $X$ is a simple abelian variety, all proper linear subvarieties are points in $\m(X)$. 
\end{rmk}

The cohomology jump loci $\Sigma^i_k(X)=\{\rho\in \m(X)\; |\; \dim H^i(X, L_\rho)\geq k\}$ are canonically defined Zariski closed subsets of $\m(X)$. It is known in many examples that these cohomology jump loci reflect the geometry of the topological space $X$. We give a brief list of the known results.  
\begin{itemize}
\item When $X$ is a compact K\"ahler manifold, each $\Sigma^i_k(X)$ is a finite union of linear subvarieties of $\m(X)$ (\cite{gl}, \cite{a}). 
\item When $X$ is a smooth projective variety, each $\Sigma^i_k(X)$ is a finite union of arithmetic subvarieties of $\m(X)$. This was first proved by Simpson \cite{si} using Gelfond-Schneider theorem. Later different proofs appeared using characteristic $p$ method \cite{pr} and using D-module theory \cite{sc}. This result was generalized to the case when $X$ is a smooth quasi-projective variety in \cite{bw}. 
\item Campana \cite{c} showed when $X$ is a compact K\"ahler manifold, $\Sigma^1_k(X)$ is a finite union of arithmetic subvarieties of $\m(X)$. 
\end{itemize}


The main result of this note is the following. 
\begin{theorem}\label{torsion}
Suppose $X$ is a compact K\"ahler manifold. Then for any $i, k\in\nn$, $\Sigma^i_k(X)$ is a finite union of arithmetic subvarieties. 
\end{theorem}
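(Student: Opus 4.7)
The plan is to combine the linear-structure theorem of Green--Lazarsfeld and Arapura with a Hodge-theoretic arithmetic argument modeled on Simpson's. Fix an irreducible component $Z$ of $\Sigma^i_k(X)$. By \cite{gl,a}, $Z = \rho \cdot \im(g^\star)$ for some morphism $g\colon X \to T$ to a complex torus $T$, necessarily factoring through the Albanese map. It suffices to exhibit a single torsion character inside $Z$, since then $Z$ is automatically an arithmetic subvariety.

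First I would record the ambient arithmetic structure. Because $\m(X) = \homo(H_1(X,\zz), \cc^*)$ depends only on the finitely generated abelian group $H_1(X,\zz)$, the character variety $\m(X)$ and the algebraic subgroup $\im(g^\star)$ are defined over $\qq$. The jump loci $\Sigma^i_k(X)$ are cut out by Fox-calculus determinantal equations over $\qq$, so each of their irreducible components is defined over $\overline{\qq}$, and one may assume $\rho$ has algebraic coordinates.

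Next I would bring in Hodge theory. On a compact K\"ahler manifold $X$, the cohomology $H^i(X, L_\sigma)$ of a unitary rank one local system carries a pure Hodge structure of weight $i$ that varies real-analytically with $\sigma$ in the unitary locus $\m(X)_{\rm un}$. Adapting Simpson's strategy, one first shows that $\m(X)_{\rm un}$ meets every component $Z$ of $\Sigma^i_k(X)$ in a real-analytic subset Zariski-dense in $Z$. Picking a unitary $\sigma \in Z$ whose coordinates are algebraic and applying Simpson's arithmetic argument (via the $\cc^*$-action on the nonabelian Hodge correspondence together with Gelfond--Schneider) then forces $\sigma$ to be a torsion character. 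An alternative route is to descend to $T$ via the projection formula $H^i(X,L_{g^*\tau \cdot \rho}) = H^i(T, L_\tau \otimes Rg_* L_\rho)$ and apply Saito's decomposition theorem for proper K\"ahler morphisms, reducing the question to jump loci of rank one local systems on the complex torus $T$, where arithmeticity can be checked directly using the explicit structure of $\m(T)$.

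The main obstacle I expect is the Hodge-theoretic density statement: in the projective case it follows from the isomorphism of Betti and de Rham moduli spaces $\mb$ and $\mdr$ together with the $\cc^*$-action of Simpson's nonabelian Hodge theory, and for compact K\"ahler manifolds this machinery must be extended or replaced. A natural substitute is Campana's technique for $\Sigma^1_k$ suitably generalized to higher cohomology via the Leray spectral sequence of $g$ combined with the decomposition theorem on $T$. Once the Hodge-theoretic pieces are assembled in the K\"ahler setting, the arithmetic conclusion follows from either Simpson's Gelfond--Schneider argument or a direct analysis on the complex torus $T$.
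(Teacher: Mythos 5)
Your proposal correctly identifies the right skeleton --- push forward along the Albanese map, apply Saito's decomposition theorem for proper K\"ahler morphisms, and study the resulting jump loci on the Albanese torus --- but it leaves a genuine gap exactly where the paper's entire difficulty sits. Your primary route (Simpson's Gelfond--Schneider argument applied directly to $X$) does not extend to the K\"ahler case: that argument requires the component to be defined over $\qq$ simultaneously in the multiplicative (Betti) coordinates and in the additive (de Rham) coordinates, and the latter rationality is only available when the Albanese torus is an abelian variety. For a non-algebraic torus the period lattice is not compatible with any $\qq$-structure on $H^1(X,\sO_X)\oplus H^0(X,\Omega^1_X)$, so the double-rationality hypothesis of Gelfond--Schneider fails. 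Your fallback to Campana's method only covers $\Sigma^1_k$. That leaves your ``alternative route,'' where you assert that after descending to the complex torus $T$, ``arithmeticity can be checked directly using the explicit structure of $\m(T)$.'' This is precisely the unproven core: the objects on $T$ are not rank one local systems but perverse sheaves underlying polarizable Hodge modules (the summands of $\mathbf{R}a_*\cc_X$), and no direct check is available when $T$ is not an abelian variety --- indeed the paper's Section 5 exhibits K\"ahler examples whose jump loci live on genuinely non-algebraic tori, so there is no shortcut.

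What the paper actually does to fill this gap is a geometric reduction from complex tori to abelian varieties. For each simple summand $M_\nu$ strictly supported on an irreducible subvariety $Y\subset T$, one uses Ueno's classification (via the algebraic reduction and the fact that subvarieties of tori with Kodaira dimension $\leq 0$ are translates of subtori) to produce a subtorus $S$ acting on $Y$ with $Y/S$ projective and $T/S$ an abelian variety; one then shows the underlying $\zz$-variation of Hodge structure is torsion along the $S$-fibres, passes to a finite isogeny $\hat T\to T$ trivializing it there, and identifies $\alpha^*(M_\nu)$ with the pullback of a Hodge module $M_0$ on the abelian variety $T/S$. Only then can Schnell's theorem (torsion points on cohomology support loci for Hodge modules with $\zz$-structure on abelian varieties) be invoked, and the conclusion transported back through the projection formula and the isogeny. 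Without this reduction --- or some substitute for it --- your argument establishes the linear structure of the components but not the existence of a torsion point on each, which is the actual content of the theorem. A secondary omission: you would also need to track $\zz$-structures through the decomposition into summands with fixed strict support (the decomposition is a priori only over $\qq$), and to handle the components of $\m(X)$ lying over the torsion of $H_1(X,\zz)$ by passing to a finite cover of $X$.
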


An immediate consequence of the theorem is the positive answer to a conjecture of Beauville and Catanese. For a compact complex manifold $X$, the Dolbeault cohomology jump loci are defined to be
$$
\Sigma^{pq}_k(X)=\{E\in \pic^\tau(X)\,|\, \dim H^q(X, \Omega^p_X\otimes E)\geq k\}.
$$
Here $\pic^\tau(X)$ is the torsion Picard group and $\Omega^p_X$ is the sheaf of holomorphic $p$-forms on $X$. 

Green and Lazarsfeld \cite{gl} showed that when $X$ is a compact K\"ahler manifold, each $\Sigma^{pq}_k(X)$ is a finite union of translates of subtori. Beauville and Catanese \cite{b} conjectured that these translates are always by torsion points. When $X$ is a smooth complex projective variety, this is proved by Simpson \cite{si}. As a corollary of Theorem \ref{torsion}, we know it is true for any compact K\"ahler manifold. 
\begin{cor}\label{pq}
When $X$ is a compact K\"ahler manifold, each $\Sigma^{pq}_k(X)$ is a finite union of torsion translates of subtori. 
\end{cor}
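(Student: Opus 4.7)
The strategy is to deduce the Dolbeault statement from Theorem~\ref{torsion} via the Hodge decomposition on unitary flat line bundles. The starting point is the identification of $\pic^\tau(X)$ with the subgroup $\homo(H_1(X,\zz), U(1))$ of unitary characters inside $\m(X)$: every $L \in \pic^\tau(X)$ carries a flat Hermitian connection, whose monodromy $\rho_L$ is a unitary character, and this identification is a real-analytic isomorphism preserving torsion. Moreover, a linear subgroup $\im(g^\star) \subset \m(X)$ coming from $g\colon \alb(X) \to T'$ meets $\pic^\tau(X)$ in $g^*\pic^0(T')$, which is a complex subtorus of $\pic^0(X)$. For $\rho \in \pic^\tau(X)$, the Hodge decomposition
\[
\dim H^i(X, L_\rho) = \sum_{p+q=i} \dim H^q(X, \Omega^p_X \otimes L_\rho^{\mathrm{hol}})
\]
is the bridge from Dolbeault jump loci on $\pic^\tau(X)$ to Betti jump loci on $\m(X)$.

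Let $Z$ be an irreducible component of $\Sigma^{pq}_k(X)$, let $m_{p',q'}$ denote the generic value of $\dim H^{q'}(\Omega^{p'} \otimes L)$ along $Z$ (so $m_{p,q} \geq k$), and set $n_i := \sum_{p'+q'=i} m_{p',q'}$. The Hodge decomposition immediately gives $Z \subset \pic^\tau(X) \cap \bigcap_i \Sigma^i_{n_i}(X)$. The key point I would then establish is that $Z$ is in fact an irreducible component of this intersection. Suppose $Z \subsetneq Z'$ is an irreducible subset of the intersection, and let $m'_{p',q'}$ denote the generic Hodge numbers along $Z'$. Upper semi-continuity along $Z \subset Z'$ gives $m'_{p',q'} \leq m_{p',q'}$ for every $(p',q')$, while $Z' \subset \Sigma^i_{n_i}(X)$ combined with the Hodge decomposition at a generic point of $Z'$ gives $\sum_{p'+q'=i} m'_{p',q'} \geq n_i = \sum_{p'+q'=i} m_{p',q'}$ for each $i$. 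Summing the first inequalities and comparing to the second forces termwise equality $m'_{p',q'} = m_{p',q'}$, so in particular $m'_{p,q} \geq k$ and $Z' \subset \Sigma^{pq}_k(X)$, contradicting maximality of $Z$.

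By Theorem~\ref{torsion} each $\Sigma^i_{n_i}(X)$ is a finite union of arithmetic subvarieties, and a routine check shows that intersections of arithmetic subvarieties are again finite unions of arithmetic subvarieties: the intersection of two subgroups of the form $\im(g^\star)$ is a finite union of torsion translates of another such subgroup (governed by the saturation of $\ker(g_{1*}) + \ker(g_{2*})$ inside $H_1(X,\zz)$), and torsion translates survive intersections using divisibility of complex subtori to lift torsion through subtorus products. Hence $\bigcap_i \Sigma^i_{n_i}(X) = \bigcup_j \sigma_j \cdot \im(g_j^\star)$ with each $\sigma_j$ torsion, and intersecting with $\pic^\tau(X)$ yields $\bigcup_j \sigma_j \cdot g_j^*\pic^0(T_j')$, a finite union of torsion translates of complex subtori of $\pic^\tau(X)$. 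Since $Z$ is an irreducible component of this union and each piece $\sigma_j \cdot g_j^*\pic^0(T_j')$ is itself irreducible, $Z$ must coincide with one of them, exhibiting $Z$ as a torsion translate of a subtorus. The main obstacle is the component claim in the second paragraph, which forces a careful interplay between the upper semi-continuity of individual Hodge numbers and the additivity of the Hodge decomposition; the structural compatibility in the last paragraph between arithmetic subvarieties of $\m(X)$ and torsion translates of subtori in $\pic^\tau(X)$ is routine but must be tracked carefully.
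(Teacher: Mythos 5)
Your proposal is correct and follows essentially the same route as the paper: identify $\pic^\tau(X)$ with the unitary characters $\m^u(X)$, use the Hodge decomposition for unitary flat line bundles, and run the generic-point/semicontinuity argument of Lemma \ref{summand} to exhibit each irreducible component of $\Sigma^{pq}_k(X)$ as an irreducible component of a Betti jump locus (made a union of arithmetic subvarieties by Theorem \ref{torsion}) intersected with $\m^u(X)$. The only cosmetic difference is that you intersect over all $i$ where only $i=p+q$ is needed, which forces your ``routine'' digression on intersections of arithmetic subvarieties; restricting to the single locus $\Sigma^{p+q}_{n_{p+q}}(X)$ avoids it entirely.
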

\begin{proof}
There is a natural real Lie group isomorphism between the torsion Picard group $\pic^\tau(X)$ and the group of unitary rank one characters $\m^u(X)$ of $\pi_1(X)$. In fact, given any holomorphic line bundle $L$ on $X$, one can construct a unitary flat metric on it. This follows from the Donaldson-Uhlenbeck-Yau Theorem \cite{uy}, but one can also give an elementary construction. Take any hermitian metric $h$ on $L$. Since $c_1(L)=0$ in $H^2(X, \rr)$, the curvature of the chern connection $\frac{1}{2\pi i}(\bar\partial+\partial_h)^2$ equals an exact (1,1)-form. By $\partial\bar\partial$-lemma, there exists a real valued function $\phi$ on $X$, such that $(\bar\partial+\partial_h)^2=2\pi i\partial\bar\partial \phi$. Define a new metric $h'=e^{-2\pi i\phi}h$. Then the chern connection with respect to $h'$ is flat. 

Now, the corollary follows from the Hodge decomposition for unitary local systems \cite{s1} and the argument in the proof of Lemma \ref{summand}. The same proof appeared in \cite[Section 5]{si} and \cite[Theorem 1.6]{sc}. We shall not give all the details here. 

\end{proof}

Our approach to prove Theorem \ref{torsion} is to extend a theorem of Schnell \cite{sc} from abelian varieties to complex tori. Recall that the cohomology jump loci can be defined relative to a constructible complex on $X$. Let $M$ be a bounded constructible complex of $\cc$-modules on $X$, the cohomology jump loci of $M$ are 
$$
\Sigma^i_k(X, M)=\{\rho\in \m(X)\,|\, \dim \hh^i(X, L_\rho\otimes_\cc M)\geq k\}.
$$
It is proved in \cite{sc2} that $\Sigma^i_k(X, M)$ is an algebraic set for any constructible complex $M$ and any $i\in \zz, k\in \nn$. A constructible complex $M$ admits a $\zz$-structure if there exists a constructible complex of $\zz$-modules $M_\mathbb{Z}$ on $X$ such that $M\cong M_\mathbb{Z}\otimes_\zz \cc$. For the definition of a constructible complex, we refer to \cite[Definition 4.1.1]{d}. 

It is a well-known fact that Theorem \ref{torsion} follows from the next theorem. For the completeness of the paper, we also include a proof at the end of Section \ref{last}. 

\begin{theorem}\label{hodge}
Let $T$ be a compact complex torus, and let $M$ be a perverse sheaf that underlies a polarizable Hodge module. Assume that $M$ admits a $\zz$-structure, and assume that $M$ is of geometric origin\footnote{By of geometric origin, we mean up to a shift, $M$ is a direct summand of $\mathbf{R}f_*(\cc_X)$ for some proper holomorphic map $f: X\to T$ in the derived category. Here, we restrict to polarizable Hodge modules of geometric origin, because we need to use the decomposition theorem for compact K\"ahler manifolds in Proposition \ref{support}.}. Then for any $i\in\zz$ and $k\in \nn$, $\Sigma^i_k(T, M)$ is a finite union of arithmetic subvarieties in $\m(T)$. 
\end{theorem}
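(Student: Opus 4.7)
The plan is to follow the strategy of Schnell's paper \cite{sc} for abelian varieties, adapting each step to the setting where $T$ is a compact complex torus rather than an abelian variety. The argument naturally splits into a geometric reduction, a support analysis via a Mellin transform, and an arithmetic conclusion using the $\zz$-structure.

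First, since $M$ is of geometric origin, the decomposition theorem for polarizable Hodge modules on compact K\"ahler manifolds (the content of Proposition \ref{support} alluded to in the footnote) expresses $M$ as a finite direct sum of simple polarizable Hodge modules on $T$. The $\zz$-structure is compatible with direct summands in the natural sense, and $\Sigma^i_k(T, M)$ is a finite union of the analogous jump loci of the summands, so we may reduce to the case where $M$ is simple. A further reduction replaces $T$ by the smallest subtorus $T'\subseteq T$ whose translates contain $\Supp M$ and uses the fact that the restriction map $\m(T)\to \m(T')$ sends torsion points to torsion points, together with the dual statement that the inverse image of an arithmetic subvariety is arithmetic, to reduce to the case where $\Supp M$ generates $T$ as a group.

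Second, in the spirit of Schnell's use of the Fourier--Mukai transform, I would introduce a Mellin-type functor sending a constructible complex on $T$ to a coherent sheaf on $\m(T)$, designed so that the cohomology jump loci $\Sigma^i_k(T, M)$ correspond to Fitting-ideal support loci of the transform. The theorem of Green--Lazarsfeld--Arapura already guarantees that these loci are finite unions of linear subvarieties of $\m(T)$; the additional content needed is that each such subvariety is a torsion translate of a subgroup. Here the $\zz$-structure on $M$ endows $\Sigma^i_k(T, M)$ with a $\qq$-structure via the canonical rational structure on $\m(T)$ coming from $H_1(T, \zz)$, so the Galois group $\mathrm{Gal}(\overline{\qq}/\qq)$ permutes the irreducible components. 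A Gelfond--Schneider / Simpson style arithmetic argument, combined with the Hodge-theoretic constraint that characters in $\Sigma^i_k$ underlie direct factors of the variation of Hodge structure carried by $M$, then forces each irreducible component to contain a torsion point. This step is essentially Schnell's arithmetic lemma, which uses only the algebraic group structure of $\m(T)$ together with its $\qq$-form, not whether $T$ itself is algebraic.

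The main obstacle I expect is the second step: constructing and controlling the Mellin transform for a compact complex torus. For abelian varieties, Schnell leverages the genuine algebraic Fourier--Mukai transform on $A\times A^\vee$ together with coherence and base-change results from algebraic geometry. For a non-algebraic complex torus, one has to work with the holomorphic analogue, and verifying the analytic coherence and semi-continuity needed to translate cohomology jump loci into Fitting supports requires care in the K\"ahler setting. The final arithmetic step, by contrast, is essentially formal once one has the algebraicity of $\Sigma^i_k$ (proved in \cite{sc2}) and its presentation via the Mellin transform together with the $\zz$-structure.
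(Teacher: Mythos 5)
There is a genuine gap, and it sits exactly where you declare the argument ``essentially formal.'' Simpson's and Schnell's torsion-point arguments do not run on the Betti $\qq$-structure of $\m(T)$ alone: having each irreducible component of $\Sigma^i_k$ stable under $\mathrm{Gal}(\overline{\qq}/\qq)$ for the $\qq$-form coming from $H_1(T,\zz)$ does not force it to contain a torsion point (a single Galois-stable point merely has coordinates in $\overline{\qq}^*$). The Gelfond--Schneider step needs a \emph{second}, independent algebraic $\qq$-structure --- the de Rham/Dolbeault one on the moduli of flat line bundles --- and a component defined over $\qq$ in both structures. That second structure exists precisely because, for an abelian variety, $\pic^0$ and the moduli of flat connections are algebraic varieties over a number field. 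For a non-algebraic compact complex torus $\pic^0(T)$ is just another complex torus with no such algebraic model, so the arithmetic conclusion you invoke is unavailable; this is the very reason the problem is not a routine transcription of \cite{sc}. Your worry about the analytic Mellin transform is comparatively minor ($\m(T)$ is still an affine algebraic group and algebraicity of $\Sigma^i_k$ is already known); the real obstruction is arithmetic, not coherence-theoretic.

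The paper avoids this entirely by a geometric reduction that is absent from your proposal. After decomposing $M$ by strict support (and checking, nontrivially, that each summand inherits the $\zz$-structure --- the Hodge-module decomposition is a priori only over $\qq$, and one descends it to $\zz$ via intermediate extensions and quotients), one uses Ueno's structure theory of subvarieties of complex tori: an irreducible $Y\subset T$ admits a subtorus $S$ acting on it so that $Y\to Y/S$ is a (weak) algebraic reduction, $Y/S$ is projective, and --- if $Y$ generates $T$ --- the quotient $T/S$ is an abelian variety. The underlying variation of Hodge structure of $M$ restricted to an $S$-fibre is torsion (this is the one lemma of \cite{sc} that transports verbatim to tori), so after a finite cover $\hat{T}\to T$ the summand becomes the pullback $\hat{p}_T^*(M_0)$ of a polarizable Hodge module $M_0$ on the abelian variety $T/S$. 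One then applies Schnell's theorem on $T/S$ as a black box and pulls the conclusion back through the isogeny and the fibration. If you want to salvage your outline, replace the Mellin-transform step by this reduction to an abelian-variety quotient; without it, the torsion statement has no arithmetic source.
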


Essentially, we will show that there is an abelian variety $\Ta$, and a morphism of complex tori $f: T\to \Ta$, such that up to an isogeny, $M$ (or better, a direct summand of $M$) is the pull back of a perverse sheaf $M'$ on $\Ta$ underlying a polarizable Hodge module. Then by applying Schnell's result to $M'$, we can deduce Theorem \ref{hodge}. Recently, in \cite{pps} Pareschi, Popa and Schnell generalized this result to the case when $M$ is a polarizable real Hodge module. Using this generalization, they extended the generic vanishing theorems to complex K\"ahler manifolds. 


In the last section, we revisit a result of Voisin \cite{v}, that is, there exist compact K\"ahler manifolds that are not of the homotopy type of any smooth projective variety. We will provide some examples similar to the ones of Voisin, but we prove they are not of the homotopy type of any smooth projective variety by studying its cohomology jump loci.

\section{Subvarieties of a complex torus}
In this section, we prove some classification results of subvarieties of complex tori. These results will be useful to relate Hodge modules on a complex torus to Hodge modules on an abelian variety. 

We first recall some notations and results from \cite{u}, with some modifications. Let $X$ be a compact analytic variety. A weak algebraic reduction of $X$ is a morphism $\psi: X'\to \Xa$ such that:
\begin{enumerate}
\item $X'$ is bimeromorphically equivalent to $X$;
\item $\Xa$ is a projective variety;
\item $\psi^*$ induces an isomorphism between $\cc(X)$ and $\cc(\Xa)$, the field of meromorphic functions. 
\end{enumerate}
A weak algebraic reduction is called an algebraic reduction, if both $X'$ and $\Xa$ are smooth. 

An algebraic reduction always exists and the birational class of $\Xa$ is uniquely determined. The following theorem is proved in \cite[Section 12]{u}. 

\begin{theorem}[\cite{u}]\label{ueno0}
Let $\psi: X'\to \Xa$ be an algebraic reduction of an analytic variety $X$. For any Cartier divisor $D$ on $X'$, and for a very general point $u\in \Xa$, the fibre $X'_u=\psi^{-1}(u)$ is connected and smooth. Moreover, the Kodaira-Iitaka dimension $\kappa(D_u, X'_u)\leq 0$, where $D_u$ is the restriction of $D$ to the fibre $X'_u$. In particular, for a very general point $u\in \Xa$, $\kappa(X'_u)\leq 0$. 
\end{theorem}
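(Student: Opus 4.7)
The plan is to handle the two assertions separately. Smoothness and connectedness of very general fibres is essentially formal from the definition of an algebraic reduction; the substantive content is the bound on Kodaira-Iitaka dimensions of restrictions of arbitrary Cartier divisors, which I would attack by contradiction via a ``too many meromorphic functions'' argument.

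First, for smoothness and connectedness: since we are in characteristic $0$ and both $X'$ and $\Xa$ are smooth, generic smoothness (Sard's theorem for the holomorphic map $\psi$) implies that the smooth locus of $\psi$ is the complement of a proper analytic subset of $\Xa$. For connectedness, use the Stein factorization $\psi = g\circ f$, where $f\colon X'\to Y$ has connected fibres and $g\colon Y\to \Xa$ is finite. The chain $\psi^*\cc(\Xa)\subseteq f^*\cc(Y)\subseteq \cc(X')$ has both inclusions becoming equalities since $\psi^*$ is an isomorphism on function fields, so $g$ is finite and birational between normal analytic spaces, hence an isomorphism; general fibres of $\psi$ therefore agree with those of $f$ and are connected.

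For the Kodaira-Iitaka statement, suppose for contradiction that some $m\geq 1$ admits $h^0(X'_u, \sO(mD)|_{X'_u})\geq 2$ for $u$ in a Zariski open dense subset $V\subseteq \Xa$; by shrinking $V$ we may assume $\psi$ is smooth and flat over $V$. By Grauert's direct image theorem $\psi_*\sO_{X'}(mD)$ is a coherent sheaf on $\Xa$, and by cohomology-and-base-change it is, after further shrinking, locally free of rank $\geq 2$ on $V$ with formation compatible with restriction to fibres. Pick two sections $s_0,s_1\in H^0(\psi^{-1}(V),\sO(mD))$ that are linearly independent on every fibre over $V$; then the meromorphic ratio $s_1/s_0$ is defined on $\psi^{-1}(V)$, extends to an element of $\cc(X')$ (its pole divisor is dominated by $|mD|$, which is global), and is non-constant on a general fibre $X'_u$. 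Hence it cannot lie in $\psi^*\cc(\Xa)$, contradicting the defining hypothesis. The ``in particular'' clause is the special case $D=K_{X'}$: since $\psi$ is smooth at $X'_u$, the relative adjunction formula gives $K_{X'}|_{X'_u}=K_{X'_u}$, whence $\kappa(X'_u)\leq 0$.

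The main obstacle is the globalization step that turns a locally defined ratio of sections into a meromorphic function witnessing extra transcendence over $\cc(\Xa)$, which requires Grauert's theorem in the analytic setting and some care with the pole divisor; a secondary bookkeeping issue is that ``very general'' must permit a countable union of proper analytic exceptional sets indexed by $m$, since the argument produces one such set for each choice of $m$.
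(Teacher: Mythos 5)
The paper does not actually prove this statement; it quotes it from Ueno \cite[Section 12]{u}, and Ueno's proof follows the same overall strategy as yours: generic smoothness and Stein factorization for the first clause, and ``two fibrewise independent sections would produce a meromorphic function transcendental over $\psi^*\cc(\Xa)$'' for the second. Your first part is correct: since $\psi^*$ is an isomorphism on function fields, the finite part $g$ of the Stein factorization is birational onto the normal $\Xa$, hence an isomorphism, so general fibres are connected; smoothness is generic smoothness in characteristic $0$.

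The gap is exactly in the globalization step you flag, and it is not a mere technicality. Your sections $s_0,s_1$ are defined only over $\psi^{-1}(V)$ for a Zariski-open $V\subseteq \Xa$, and $X'\setminus\psi^{-1}(V)$ is in general a hypersurface. A meromorphic function on the complement of a hypersurface need not extend meromorphically across it (the Levi-type extension theorems require the exceptional set to have codimension at least two, or require growth control near it), and Remmert--Stein does not even extend the divisor of zeros of $s_0$, since that divisor has the same dimension as the boundary. The phrase ``its pole divisor is dominated by $|mD|$, which is global'' conflates a divisor on the open set $\psi^{-1}(V)$ with one on $X'$: the $s_i$ are arbitrary analytic sections of the coherent sheaf $\psi_*\sO_{X'}(mD)$ over $V$ and can degenerate arbitrarily at the boundary. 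The standard repair, which is what Ueno does, uses projectivity of $\Xa$: pick an ample divisor $H$ on $\Xa$; by Grauert's coherence theorem, $\psi_*\sO_{X'}(mD)\otimes\sO_{\Xa}(kH)$ is globally generated for $k\gg 0$, and by generic base change every section of $\sO(mD_u)$ on a general fibre lifts to a global section of $\sO_{X'}(mD+k\psi^*H)$ on the compact manifold $X'$. Ratios of two such global sections are honest elements of $\cc(X')=\psi^*\cc(\Xa)$, hence constant on general fibres, which forces $h^0(X'_u,\sO(mD_u))\le 1$. With that substitution the rest of your argument (absorbing the countably many values of $m$ into ``very general,'' and adjunction $K_{X'}|_{X'_u}=K_{X'_u}$ for the final clause) is fine.
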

If $\psi: X'\to \Xa$ is only a weak algebraic reduction, and if a general fibre $X'_u$ of $\psi$ is smooth, then we can obtain the same conclusion as in the theorem using resolution of singularity. 

\begin{lemma}\label{generate}
Let $B$ be a complex torus and let $X\subset B$ be an irreducible analytic subvariety. Suppose $X$ is a Moishezon space and $X$ generates $B$, i.e., there does not exist a proper subtorus of $B$ containing $\{x_1-x_2\,|\,x_1, x_2\in X\}$. Then $B$ is an abelian variety. In particular, if a Moishezon space is isomorphic to an analytic subvariety of a complex torus, then it is a projective variety. 
\end{lemma}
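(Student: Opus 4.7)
The plan is to construct a surjective morphism of complex tori from an abelian variety onto $B$, after which the lemma will follow because quotients of abelian varieties by closed subtori are themselves abelian. The main tool will be the universal property of the Albanese variety applied to a smooth projective modification of $X$.

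Since $X$ is Moishezon, I would first invoke Moishezon's theorem, combined with analytic resolution of singularities, to obtain a bimeromorphic modification $\sigma: \tilde X \to X$ with $\tilde X$ smooth and projective. Composing with the inclusion gives a holomorphic map $g: \tilde X \to B$; fixing a base point $\tilde x_0 \in \tilde X$ and letting $b_0 = g(\tilde x_0)$, the universal property of the Albanese produces a factorization
\[
g(\tilde x) = f\bigl(a(\tilde x)\bigr) + b_0,
\]
where $a: \tilde X \to \alb(\tilde X)$ is the Albanese map and $f: \alb(\tilde X) \to B$ is a morphism of complex tori. Because $\tilde X$ is projective, $\alb(\tilde X)$ is an abelian variety, which is the source of algebraicity in the argument. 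The image $\im(f)$ is a closed subtorus of $B$ containing $g(\tilde X) - b_0 = X - b_0$, and hence contains every difference $x_1 - x_2$ with $x_1, x_2 \in X$; the hypothesis that $X$ generates $B$ then forces $\im(f) = B$. Thus $B$ is a quotient of the abelian variety $\alb(\tilde X)$ by the closed subtorus $\ker(f)^0$, and by Poincar\'e complete reducibility this quotient is again an abelian variety.

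For the ``in particular'' claim, I would let $Y$ be a Moishezon analytic subvariety of an arbitrary complex torus $B'$, pick $y_0 \in Y$, and take $B \subset B'$ to be the smallest subtorus containing $Y - y_0$. Then $Y - y_0$ generates $B$ in the sense of the lemma, so by the main statement $B$ is abelian; consequently $Y$ is a closed analytic subvariety of the projective variety $B + y_0$ and is projective by Chow's theorem. I do not foresee a serious obstacle in carrying this plan out: the external inputs required---existence of a smooth projective modification of a singular Moishezon space, the universal property of the Albanese, and Poincar\'e reducibility---are all classical. If anything, the only point deserving care is the initial projective modification of a possibly singular $X$, which relies on Moishezon's theorem after desingularization rather than on any new idea.
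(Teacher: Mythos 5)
Your proposal is correct and follows essentially the same route as the paper: resolve/projectivize the Moishezon space via Moishezon's theorem, push the map to $B$ through the Albanese of the resulting smooth projective variety (an abelian variety), and use the generation hypothesis to conclude the induced torus homomorphism is surjective, so that $B$ is a quotient of an abelian variety. Your spelled-out treatment of the ``in particular'' clause and of why surjectivity follows from generation merely makes explicit what the paper leaves implicit.
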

\begin{proof}
According to \cite{m}, there exist a smooth projective variety $X'$ and a bimeromorphic morphism $X'\to X$. The composition $X'\to X\to B$ induces a map on the Albanese $\gamma: \alb(X')\to \alb(B)=B$. Since $X$ generates $B$, $\gamma$ is surjective. Since $X'$ is a smooth projective variety, $\alb(X')$ is an abelian variety, and hence $B$ is also an abelian variety.
\end{proof}
In the rest of this section, we assume $T$ to be a complex torus, and $Y$ to be an irreducible analytic subvariety of $T$. The following proposition is an analog of \cite[Theorem 10.9]{u}. 
\begin{prop}\label{fibre}
There is a subtorus $S$ of $T$, such that the action of $S$ on $T$ preserves $Y$ and the quotient map $p_Y: Y\to Y/S$ is a weak algebraic reduction of $Y$. 
\end{prop}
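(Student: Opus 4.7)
The plan is to obtain the desired subtorus $S$ as the ``common subtorus'' to which very general fibers of an algebraic reduction of $Y$ map into $T$. First I would pick a resolution of singularities $\mu: X'\to Y$ and an algebraic reduction $\psi: X'\to \Xa$ of $X'$. By Theorem \ref{ueno0}, for very general $u\in \Xa$ the fiber $F_u=\psi^{-1}(u)$ is smooth, connected, and satisfies $\kappa(F_u)\leq 0$. Let $Y_u\subset T$ denote the image of the composition $F_u\hookrightarrow X'\stackrel{\mu}{\to}Y\hookrightarrow T$.

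The main analytic ingredient is the classical fact (a version of Ueno's structure theorem for subvarieties of complex tori) that the image in a complex torus of a compact K\"ahler manifold of nonpositive Kodaira dimension is a translate of a subtorus. Applied to $F_u\to T$, this gives that each $Y_u$ is a translate of some subtorus $S_u\subset T$. Since sublattices of $\pi_1(T)\cong\zz^{2\dim T}$ form a countable set, there are only countably many subtori of $T$, so a standard Baire category argument shows $S_u$ is constant on a very general (hence dense) subset of $\Xa$. Let $S$ denote this common value. Because $Y$ is the closure of the union of the $Y_u$'s for $u$ in this dense subset, and each such $Y_u$ is a translate of $S$ contained in $Y$, it follows that $S+Y=Y$.

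It remains to verify that $p_Y:Y\to Y/S$ is a weak algebraic reduction. Since $\mu$ is bimeromorphic, the restriction $F_u\to Y_u$ is generically finite for very general $u$, so $\dim F_u=\dim Y_u=\dim S$, and therefore $\dim(Y/S)=\dim Y-\dim S=\dim X'-\dim F_u=\dim \Xa$. For the meromorphic function fields, any $f\in\cc(Y)=\cc(X')=\psi^{*}\cc(\Xa)$ is constant on $F_u$ for very general $u$, hence constant on each $Y_u$, and thus constant on every $S$-coset in $Y$ by continuity; so $f$ descends to $Y/S$ and $p_Y^{*}:\cc(Y/S)\to\cc(Y)$ is an isomorphism. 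In particular the algebraic dimension of $Y/S$ equals $\dim(Y/S)$, so $Y/S$ is Moishezon, and Lemma \ref{generate} applied to $Y/S\subset T/S$ (after restricting to the subtorus generated by $Y/S$ if necessary) then yields that $Y/S$ is projective.

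The main obstacle I expect is the cited analytic input: the assertion that the image of a compact K\"ahler manifold with $\kappa\leq 0$ in a complex torus is a translate of a subtorus. Once this structural result is granted, the rest of the proof is a formal argument combining the algebraic reduction of $Y$, the countability of subtori of $T$, and the elementary dimension-counting recorded above.
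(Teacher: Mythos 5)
Your overall architecture is sound and matches the paper's in its main inputs: an algebraic reduction $\psi$, Theorem \ref{ueno0}, Ueno's theorem that a subvariety $Z$ of a complex torus with $\kappa(Z)\le 0$ is a translate of a subtorus (your ``image'' version reduces to this one because, as you note, $F_u\to Y_u$ is generically finite and hence $\kappa(Y_u)\le\kappa(F_u)\le 0$), the countability of subtori of $T$, and Lemma \ref{generate}. Where you genuinely diverge is in the middle: the paper upgrades ``very general fibres are translates of subtori'' to ``\emph{all} fibres over the smooth locus of $\psi$ are translates of one fixed $S$'' by a deformation argument (topological invariance of the Albanese dimension), and then identifies $\psi$ with $p_Y$ bimeromorphically; you instead try to get $S+Y=Y$ directly from density of the good fibres and then verify the three axioms of a weak algebraic reduction by hand (dimension count plus descent of meromorphic functions to $Y/S$). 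Your route avoids the deformation-theoretic input, and the verification at the end is correct as far as it goes.

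However, the step ``a standard Baire category argument shows $S_u$ is constant on a very general (hence dense) subset of $\Xa$'' is a genuine gap as stated: a countable partition of a comeager set need not contain a comeager, or even a dense, piece, so Baire category alone does not single out one subtorus on a dense set of $u$. This matters because your closure argument for $S+Y=Y$ and your function-field descent both require a Zariski-dense family of fibres that are translates of the \emph{same} $S$. The fix stays within your framework: for each of the countably many subtori $S'$ of $T$, the set $A_{S'}=\bigcap_{s\in S'}(Y-s)$ is a closed analytic subset of $Y$ containing $\bigcup_{\{u:\,S_u=S'\}}Y_u$. The union of these over all $S'$ contains the complement in $Y$ of the countably many proper closed analytic subsets $\mu(\psi^{-1}(Z_n))$, where the $Z_n$ are the loci excluded by ``very general''; since $Y$ is an irreducible Baire space, some $\bigcup_{\{u:\,S_u=S\}}Y_u$ is non-meager, hence not contained in any proper closed analytic subset, hence $A_S=Y$. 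This one $S$ has $\dim S$ equal to the generic fibre dimension of $\psi$, gives $S+Y=Y$, and supplies the Zariski-dense family of $S$-cosets on which meromorphic functions pulled back from $\Xa$ are constant; the identity theorem for meromorphic functions then completes your descent argument. (Alternatively, the paper's deformation argument shows $u\mapsto S_u$ is locally constant on the smooth locus of $\psi$, which closes the same gap.)
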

\begin{proof}
Let $\psi: Y'\to \Ya$ be an algebraic reduction of $Y$. By Theorem \ref{ueno0}, a very general fibre $Y'_u$ of $\psi: Y'\to \Ya$ has $\kappa(Y'_u)\leq 0$. It is also proved in \cite[Section 10]{u} that if $Z$ is an analytic subvariety of $T$ with $\kappa(Z)\leq 0$, then $Z$ is a translate of a subtorus. Thus a very general fibre of $\psi: Y'\to \Ya$ is bimeromorphically equivalent to a subtorus of $T$. 

Let $U$ be the open subset of $\Ya$, where $\psi$ is smooth. Since the Albanese map can be defined for a smooth family of analytic varieties, and since the Albanese dimension is a topological invariant (see \cite{lp}), a small deformation of an analytic variety bimeromorphic to a complex torus is also bimeromorphic to a complex torus. Thus every fibre $Y'_u$ of $\psi: Y'\to \Ya$ for $u\in U$ is bimeromorphic to a subtorus of $T$. Since there are only countably many subtori in $T$, all $Y'_u$ for $u\in U$ are isomorphic to some subtorus $S$ of $T$. Therefore, the algebraic reduction $\psi: Y'\to \Ya$ is bimeromorphically equivalent to the quotient map $Y\to Y/S$. Hence $Y/S$ is a Moishezon space. Since $Y/S$ is an analytic subvariety of a complex torus $T/S$, by Lemma \ref{generate}, $Y/S$ is a projective variety. 
\end{proof}

\begin{rmk}
Under the assumption of the proposition, Ueno \cite[Theorem 10.9]{u} proved that there exists a subtorus $\mathcal{S}$ of $T$, such that the action of $\mathcal{S}$ preserves $Y$ and the quotient $Y/\mathcal{S}$ is of general type. In general, our $S$ in the proposition will be a subtorus of the $\mathcal{S}$ of Ueno. 
\end{rmk}

\begin{prop}\label{divisor}
Let $S$ be the subtorus of  $T$ as in Proposition \ref{fibre}. Any Cartier divisor $D$ on $Y$ is the pullback of a Cartier divisor on $Y/S$ by $p_Y: Y\to Y/S$. 
\end{prop}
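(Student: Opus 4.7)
The plan is to reduce $D$ to a difference of effective Cartier divisors on a smooth bimeromorphic model, apply Theorem~\ref{ueno0} to each summand, and conclude via a basic fact about line bundles on a complex torus.

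Choose a bimeromorphic resolution $\phi:Y'\to Y$ with $Y'$ smooth, and set $\psi=p_Y\circ\phi:Y'\to Y/S$. By the proof of Proposition~\ref{fibre}, $\psi$ is an algebraic reduction of $Y'$ whose very general fibre $Y'_u$ is isomorphic to the complex subtorus $S$. Since $Y'$ is smooth, every Weil divisor on $Y'$ is Cartier, so we may write $\phi^*D=D_1-D_2$ with $D_1,D_2$ effective Cartier divisors on $Y'$. Applying Theorem~\ref{ueno0} to $D_1$ and $D_2$ separately yields $\kappa((D_i)_u,Y'_u)\leq 0$ for very general $u\in Y/S$, where each $(D_i)_u$ is an effective divisor on the complex torus $Y'_u\cong S$.

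The key ingredient is the following: any non-zero effective divisor $E$ on a complex torus $S$ satisfies $\kappa(\mathcal{O}_S(E))\geq 1$. Indeed, the defining section of $E$ forces the Hermitian form $H=c_1(\mathcal{O}_S(E))$ to be positive semi-definite, and the Appell--Humbert description of line bundles on a complex torus, together with the Frobenius/Pfaffian formula, yields $h^0(\mathcal{O}_S(mE))\sim m^{r}$ with $r$ the complex rank of $H$. If $r=0$ then $\mathcal{O}_S(E)\in\mathrm{Pic}^0(S)$ admits a nonzero section and is therefore trivial, whence $E$ would be a principal effective divisor on the compact torus $S$ and hence vanish, contradicting $E\neq 0$.

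Combining the two steps, $(D_i)_u=0$ for very general $u$, so no irreducible component of $D_i$ dominates $Y/S$ under $\psi$. Working over the dense open subset $U\subset Y/S$ where $\psi$ is smooth with fibre $\cong S$ and $\phi$ is an isomorphism, $D_i|_{\psi^{-1}(U)}$ is a $\mathbb{Z}$-combination of reduced preimages of irreducible divisors in $U$, so equals $\psi^*(F_i|_U)$ for some effective Cartier divisor $F_i|_U$ on $U$; taking closures gives an effective Cartier divisor $F_i$ on the projective variety $Y/S$ with $D_i=\psi^*F_i$. Hence $\phi^*D=\psi^*(F_1-F_2)=\phi^*p_Y^*(F_1-F_2)$, and since $\phi$ is a proper bimeromorphic morphism, the two Cartier divisors $D$ and $p_Y^*(F_1-F_2)$ on $Y$ agree on the dense open set where $\phi$ is an isomorphism, hence as Cartier divisors on $Y$. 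The main obstacle is the key fact about effective divisors on a complex torus: although classical on abelian varieties via Riemann--Roch, one must argue via Appell--Humbert and the Pfaffian growth of $h^0$ governed by positivity of the Hermitian form, rather than by algebraic ampleness; a secondary technicality is the clean descent of the Cartier divisor equality through the bimeromorphic resolution $\phi$ and the singularities of $Y$ and $Y/S$.
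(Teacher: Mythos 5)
Your proposal is correct and follows the same overall strategy as the paper: restrict the divisor to a very general fibre of the (weak) algebraic reduction $p_Y:Y\to Y/S$, invoke Theorem~\ref{ueno0} to get $\kappa\leq 0$ there, and derive a contradiction from the fact that a nonzero effective divisor on a complex torus has Iitaka dimension at least one. The difference lies in how you prove that last fact. The paper does it in two lines with the theorem of square: $(D_u+t)+(D_u-t)\sim 2D_u$ for all $t\in S$, so $h^0(S,\mathscr{O}_S(2D_u))\geq 2$ and $\kappa(D_u,S)\geq 1$. You instead go through the Appell--Humbert description and the Pfaffian growth $h^0(mE)\sim m^r$; this is true but noticeably more delicate on a non-algebraic torus, since the clean growth formula requires knowing that the null space of the semi-positive Hermitian form meets the lattice in a lattice (which does hold for effective bundles, but itself needs an argument), whereas the theorem of square is available for arbitrary complex tori with no such care. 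On the other hand, your reduction to the effective case is more scrupulous than the paper's bare ``without loss of generality'': you pass to a smooth model $Y'$ where every Cartier divisor is a difference of effective ones, which is a genuine improvement since $Y$ need not be projective. The price is the final descent step: from $\phi^*D=\phi^*p_Y^*(F_1-F_2)$ you conclude equality of Cartier divisors on $Y$ because they agree on the locus where $\phi$ is an isomorphism, but if $Y$ is non-normal along a divisor this locus can miss a codimension-one subset, and a Cartier divisor supported there need not vanish; you should either normalize first or argue that the exceptional locus of $\phi$ maps into a set over which the local defining equations are already determined. Neither issue is fatal, but the paper's route through the theorem of square avoids both.
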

\begin{proof}
When $\dim S=0$, the statement is trivial. So we can assume $\dim S\geq 1$. Without loss of generality, we can also assume $D$ is nontrivial and effective. Suppose $D$ is not the pullback of any divisor on $Y/S$. Then for a general point $u\in Y/S$, $D_u$ is a nontrivial effective Cartier divisor of $Y_u$, where $Y_u$ is the fibre of $p_Y: Y\to Y/S$ and $D_u$ is the restriction of $D$ to $Y_u$. 

According to Theorem \ref{ueno0}, it suffices to show $\kappa(D_u, S)\geq 1$ for a nontrivial effective Cartier divisor $D_u$ in $S$. By the theorem of square, for any $t\in S$, $(D_u+t)+(D_u-t)$ is linearly equivalent to $2D_u$. Therefore, $H^0(S, \mathscr{O}_S(2D_u))$ has dimension at least two. In other words, $\kappa(D_u, S)\geq 1$. 
\end{proof}

\section{Cohomology jump loci}
In this section, we review some basic results about cohomology jump loci. 
\begin{lemma}\label{summand}
Given any complex torus $T$ and a constructible complex $E$ on $T$, suppose $E=\bigoplus_{1\leq j\leq t} E_j$. Then the following statements are equivalent for a fixed $i\in \nn$,
\begin{enumerate}
\item $\Sigma^i_k(T, E)$ is a finite union of arithmetic subvarieties for every $k\in \nn$;
\item $\Sigma^i_k(T, E_j)$ is a finite union of arithmetic subvarieties for every $k\in \nn$ and $1\leq j\leq t$.
\end{enumerate}
\end{lemma}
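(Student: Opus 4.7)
The starting point is the decomposition
\[
\hh^i(T, L_\rho \otimes_\cc E) \;=\; \bigoplus_{j=1}^t \hh^i(T, L_\rho \otimes_\cc E_j)
\]
that holds for every $\rho \in \m(T)$. Writing $d(\rho) := \dim \hh^i(T, L_\rho \otimes_\cc E)$ and $d_j(\rho) := \dim \hh^i(T, L_\rho \otimes_\cc E_j)$, this gives $d = \sum_j d_j$, and each of these is a uniformly bounded upper semicontinuous function on $\m(T)$.

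For $(2) \Rightarrow (1)$, the additivity above yields the finite Boolean identity
\[
\Sigma^i_k(T, E) \;=\; \bigcup_{\substack{(k_1,\ldots,k_t) \in \nn^t \\ k_1 + \cdots + k_t = k}} \bigcap_{j=1}^t \Sigma^i_{k_j}(T, E_j),
\]
so it suffices to show that the class of finite unions of arithmetic subvarieties of $\m(T)$ is closed under finite intersection. Given two arithmetic subvarieties $\rho_a \cdot \im(g_a^\star)$ ($a = 1, 2$) with nonempty intersection, each connected component is a coset of $\im(g_1^\star) \cap \im(g_2^\star)$. I would first verify that this intersection of algebraic subgroups is again of the form $\im(g^\star)$, by writing $\im(g_a^\star)$ as the characters of $\pi_1(X)$ killing $\ker (g_a)_*$ and taking the quotient of $H_1(X, \zz)$ by the sum of these kernels, which is the abelianized fundamental group of a suitable quotient complex torus. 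To produce a torsion point in each component, pick any $\sigma_0$ in it, observe that $\sigma_0^N$ lies in the subgroup for $N$ a common multiple of the orders of $\rho_1, \rho_2$, and use divisibility in $\m(T) \cong (\cc^*)^{b_1(T)} \times F$ to adjust $\sigma_0$ within its coset to a torsion point.

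For $(1) \Rightarrow (2)$, I would reduce by induction on $t$ to the case $t = 2$ and then argue by induction on the numerical invariants. Let $V$ be an irreducible component of $\Sigma^i_k(T, E_1)$ on which $d_1$ is generically equal to some $k' \geq k$ and $d_2$ is generically equal to some $m \geq 0$. Upper semicontinuity forces $d_1 \geq k'$ and $d_2 \geq m$ pointwise on $V$, so $V \subseteq \Sigma^i_{k'+m}(T, E)$, which by $(1)$ is a finite union of arithmetic subvarieties; hence $V$ is contained in some arithmetic $W$ from the decomposition, and $W$ contains a torsion point $\tau$. The remaining task is to push $\tau$ into $V$ itself, by trimming $W$ against the proper closed subset $W \cap \Sigma^i_{m+1}(T, E_2) \supseteq W \setminus V$ using a simultaneous induction on $(k,m)$.

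The main obstacle will be the direction $(1) \Rightarrow (2)$: additivity $d = \sum_j d_j$ is a single relation among $t+1$ upper semicontinuous functions, so recovering individual arithmetic structure for the $d_j$-jump loci from that of $d$ requires an induction that simultaneously controls every $k$ and the jump loci of the complementary summand, and keeps the auxiliary jump loci that appear always within the class already covered by $(1)$ or by a previous induction step.
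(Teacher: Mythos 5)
Your direction $(2)\Rightarrow(1)$ is the paper's argument (the paper in fact just states the Boolean identity and leaves the closure of the class of finite unions of arithmetic subvarieties under intersection implicit; your sketch of that closure is a reasonable supplement). The problem is $(1)\Rightarrow(2)$, where you correctly identify the difficulty but do not resolve it, and the fix you propose does not work. First, the inclusion $W\setminus V\subseteq W\cap\Sigma^i_{m+1}(T,E_2)$ is false in general: a point of $W\setminus V$ may lie on \emph{another} irreducible component of $\Sigma^i_{k'}(T,E_1)$, where $d_1\geq k'$ and hence $d_2$ is only forced to be $\geq m$, not $\geq m+1$. Second, even if your trimming produced a torsion point of $V$, that would not show $V$ is arithmetic: an arithmetic subvariety is a torsion translate of a subgroup of the form $\im(g^\star)$, and for the jump loci of a general constructible complex $E_1$ you have no a priori linearity statement for $V$, so locating a torsion point on $V$ proves nothing by itself.

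The missing observation is that your $V$ is not merely \emph{contained in} $\Sigma^i_{k_0}(T,E)$ with $k_0=k'+m$ (more generally $k_0=\sum_j d_j^{\mathrm{gen}}(V)$): it is an irreducible \emph{component} of it. Indeed, if $V\subsetneq C'$ with $C'$ irreducible and $C'\subseteq\Sigma^i_{k_0}(T,E)$, then for each $j$ the generic value of $d_j$ on $C'$ is at most its value at a general point $\rho_0$ of $V$ (since $\rho_0\in C'$ and the generic value is the minimum), while the sum of these generic values is at least $k_0=\sum_j d_j(\rho_0)$; hence all the generic values agree with those on $V$, so in particular $d_1\geq k'\geq k$ on all of $C'$ by semicontinuity, i.e.\ $C'\subseteq\Sigma^i_k(T,E_1)$, contradicting the maximality of $V$ as a component. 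Therefore $V$ equals one of the (irreducible) arithmetic subvarieties in the decomposition of $\Sigma^i_{k_0}(T,E)$ furnished by $(1)$, which is exactly what is needed; no trimming induction is required. This is the paper's argument, and it also dispenses with your reduction to $t=2$.
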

\begin{proof}
Since for any rank one local system $L$ on $T$, 
$$
H^i(T, L\otimes_{\cc} E)\cong \bigoplus_{1\leq j\leq t}H^i(T, L\otimes_\cc E_j)
$$
we have 
$$
\Sigma^i_k(T, E)=\bigcup_\mu \bigcap_{1\leq j\leq t} \Sigma^i_{\mu(j)}(T, E_j)
$$
where the union is over all functions $\mu: \{1, 2, \ldots, t\}\to \nn$ satisfying $\sum_{1\leq j\leq t}\mu(j)=k$. Therefore, (2)$\Rightarrow$(1). 

Conversely, assuming that (1) is true, without loss of generality, we only need to prove $\Sigma^i_k(T, E_1)$ is a finite union of arithmetic subvarieties. Let $C$ be an irreducible component of $\Sigma^i_k(T, E_1)$. Take a general point $\rho_0\in C$. Here by $\rho_0$ being a general point, we require that 
$$
\dim H^i(T, L_{\rho_0}\otimes_\cc E_j)=\min\{\dim H^i(T, L_\rho\otimes_\cc E_j)\,|\,\rho\in C\}
$$
for every $1\leq j\leq t$. Then $C$ is an irreducible component of $\Sigma^i_{k_0}(T, E)$, where
$$k_0=\sum_{1\leq j\leq t} \dim H^i(T, L_{\rho_0}\otimes_\cc E_j).$$
Therefore, $C$ is an arithmetic subvariety, and hence $\Sigma^i_k(T, E_1)$ is a finite union of arithmetic subvarieties. 
\end{proof}

\begin{lemma}\label{cover}
Let $\hat{T}$ and $T$ be two complex tori, and let $\alpha: \hat{T}\to T$ be an isogeny. We denote the induced map between the character varieties by $\alpha^\star: \m(T)\to \m(\hat{T})$. Let $E$ be a constructible complex on $T$. The following statements are equivalent for a fixed $i\in \nn$,
\begin{enumerate}
\item $\Sigma^i_k(T, E)$ is a finite union of arithmetic subvarieties for every $k\in \nn$;
\item $\Sigma^i_k(\hat{T}, \alpha^*(E))$ is a finite union of arithmetic subvarieties for every $k\in \nn$. 
\end{enumerate}
\end{lemma}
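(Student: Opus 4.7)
The strategy is to push $\alpha^*E$ forward along the finite \'etale map $\alpha$ and reduce the question to one about $E$ on $T$, where Lemma \ref{summand} applies.

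Because $\alpha$ is a finite \'etale cover with deck group $K=\ker(\alpha)$, the projection formula gives $\alpha_*\alpha^*E\cong E\otimes_\cc\alpha_*\cc_{\hat T}$, and $\alpha_*\cc_{\hat T}$ decomposes via the regular representation of $K$ as $\bigoplus_{\chi\in K^\vee}L_\chi$. Dualising the exact sequence $1\to\pi_1(\hat T)\to\pi_1(T)\to K\to 1$, and using divisibility of $\cc^*$, identifies $\alpha^\star:\m(T)\to\m(\hat T)$ as a surjective isogeny of algebraic tori with finite kernel canonically equal to $K^\vee$. Combining these ingredients, for any $\rho\in\m(T)$ with image $\sigma=\alpha^\star(\rho)$ we obtain
$$
H^i(\hat T,L_\sigma\otimes\alpha^*E)\;\cong\;\bigoplus_{\chi\in K^\vee}H^i(T,L_{\rho\chi}\otimes E),
$$
and hence the key identification
$$
(\alpha^\star)^{-1}\bigl(\Sigma^i_k(\hat T,\alpha^*E)\bigr)\;=\;\Sigma^i_k(T,\alpha_*\alpha^*E).
$$

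Next I would record how $\alpha^\star$ interacts with arithmetic subvarieties. The forward direction is immediate: $\alpha^\star(\rho\cdot\im(g^\star))=\alpha^\star(\rho)\cdot\im((g\circ\alpha)^\star)$, and $\alpha^\star$ sends torsion to torsion. For the pullback, write an arithmetic subvariety of $\m(\hat T)$ as $\rho\cdot L^\perp$ with $L=\ker(h_*)$ a saturated sub-Hodge-structure of $\pi_1(\hat T)$ and $\rho$ torsion; then $(\alpha^\star)^{-1}(L^\perp)=(\alpha_*L)^\perp$, and saturating $\alpha_*L$ inside $\pi_1(T)$ to $L'$ expresses this as a finite union of torsion translates of $(L')^\perp$. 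A torsion lift $\tilde\rho$ of $\rho$ exists because $\ker(\alpha^\star)$ is finite. So the preimage under $\alpha^\star$ of an arithmetic subvariety is a finite union of arithmetic subvarieties.

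With these ingredients in hand the equivalence follows formally. For $(2)\Rightarrow(1)$, $\Sigma^i_k(T,\alpha_*\alpha^*E)=(\alpha^\star)^{-1}\Sigma^i_k(\hat T,\alpha^*E)$ is a finite union of arithmetic subvarieties; applying Lemma \ref{summand} to $\alpha_*\alpha^*E\cong\bigoplus_\chi L_\chi\otimes E$, and using that $\Sigma^i_k(T,L_\chi\otimes E)=\chi^{-1}\cdot\Sigma^i_k(T,E)$, the case $\chi=1$ recovers (1). The converse reverses the argument: translate $\Sigma^i_k(T,E)$ by the torsion characters $\chi\in K^\vee$, apply Lemma \ref{summand} to assemble $\Sigma^i_k(T,\alpha_*\alpha^*E)$, and take its image under $\alpha^\star$. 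The step I expect to demand the most care is the saturation argument in the pullback analysis: a priori $\alpha_*L$ need not be saturated in $\pi_1(T)$, and one must verify that the residual finite ambiguity $L'/\alpha_*L$ contributes only a finite group of torsion characters, so that arithmeticity is preserved under $(\alpha^\star)^{-1}$. Once the Pontryagin-dual bookkeeping between $K$, $K^\vee$ and $L'/\alpha_*L$ is in place, the rest is formal.
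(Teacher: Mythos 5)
Your argument is essentially the paper's own proof: both rest on the projection-formula decomposition $\alpha_*\alpha^*E\cong\bigoplus_{\chi\in\ker(\alpha^\star)}L_\chi\otimes E$, the identification of $(\alpha^\star)^{-1}\bigl(\Sigma^i_k(\hat T,\alpha^*E)\bigr)$ with $\Sigma^i_k(T,\alpha_*\alpha^*E)$, Lemma \ref{summand}, and the fact that the twists $L_\chi$ are torsion. The only difference is that you spell out why $\alpha^\star$ and $(\alpha^\star)^{-1}$ preserve finite unions of arithmetic subvarieties (the saturation of $\alpha_*L$ and the finite torsion component group), a point the paper uses implicitly; this is a correct and worthwhile elaboration, not a departure.
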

\begin{proof}

Since for any rank one local system $L$ on $T$, we have 
$$
H^i(\hat{T}, \alpha^*(L)\otimes_\cc\alpha^*(E))\cong H^i(T, L\otimes_\cc\alpha_*\alpha^*(E)).
$$
Therefore, 
$$
\Sigma^i_k\left(\hat{T}, \alpha^*(E)\right)=\alpha^\star\left(\Sigma^i_k\left(T, \alpha_*\alpha^*(E)\right)\right).
$$
By definition, $\ker(\alpha^\star)$ consists of all rank one local systems on $T$, whose pull-back to $\hat{T}$ are isomorphic to the trivial local system. Since $\alpha_*\alpha^*(\cc_{\hat{T}})\cong \bigoplus_{\rho\in \ker(\alpha^\star)} L_\rho$, by projection formula \cite[page 320]{i}, 
$$\alpha_*\alpha^*(E)\cong \bigoplus_{\rho\in \ker(\alpha^\star)} L_\rho\otimes_\cc E. $$
Moreover, since the cohomology of a local system can be computed using the derived push-forward to a point, 
$$H^i(\hat{T}, \alpha^*(E))\cong H^i(T, \alpha_*\alpha^*(E)).$$
Therefore, we have 
$$
(\alpha^\star)^{-1}\left(\Sigma^i_k\left(\hat{T}, \alpha^*(E)\right)\right)=\Sigma^i_k\left(T, \,\bigoplus_{\rho\in \ker(\alpha^\star)} L_\rho\otimes_\cc E\right).
$$
Now, $\Sigma^i_k(\hat{T}, \alpha^*(E))$ is a finite union of arithmetic subvarieties if and only if \\
$\Sigma^i_k\left(T, \bigoplus_{\rho\in \ker(\alpha^\star)} L_\rho\otimes_\cc E\right)$ is a finite union of arithmetic subvarieties. Hence by Lemma \ref{summand}, $\Sigma^i_k(\hat{T}, \alpha^*(E))$ is a finite union of arithmetic subvarieties if and only if $\Sigma^i_k(T, L_\rho\otimes_\cc E)$ is a finite union of arithmetic subvarieties for any $\rho\in \ker(\alpha^\star)$. Thus, (2)$\Rightarrow$(1). Conversely, notice that any $\rho\in \ker(\alpha^\star)$ is a torsion point in $\m(T)$. Since $\Sigma^i_k(T, E)$ is a finite union of arithmetic subvarieties, $\Sigma^i_k(T, L_\rho\otimes_\cc E)$ is also a finite union of arithmetic subvarieties for any $\rho\in \ker(\alpha^\star)$. Thus, (1)$\Rightarrow$(2). 
\end{proof}

\section{Polarizable Hodge module}\label{last}

In this section, we first prove the following proposition, which is slightly weaker than Theorem \ref{hodge}. Then we will use it to deduce Theorem \ref{hodge}. 

\begin{prop}\label{support}
Let $T$ be a complex torus, and let $M$ be a perverse sheaf that underlies a polarizable Hodge module of geometric origin. Suppose every direct summand of $M$ is strictly supported on an irreducible subvariety $Y$ of $T$, and suppose $M$ admits a $\zz$-structure. Then $\Sigma^i_k(T, M)$ is a finite union of arithmetic subvarieties, for any $i\in \zz$ and $k\in \nn$. 
\end{prop}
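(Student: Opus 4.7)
The strategy is to reduce Proposition~\ref{support} to Schnell's theorem for abelian varieties by realizing $M$, after an isogeny of $T$ and a twist by a torsion character, as the pullback of a polarizable Hodge module on the complex torus $T/S$ whose support lies inside an abelian subvariety.

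By hypothesis $M$ is strictly supported on an irreducible $Y\subset T$. Apply Proposition~\ref{fibre} to obtain a subtorus $S\subseteq T$ preserving $Y$ such that $p_Y:Y\to Y/S$ is a weak algebraic reduction and $Y/S$ is projective; Lemma~\ref{generate} then guarantees that, after a translation, $Y/S$ lies in an abelian subvariety $A\subseteq T/S$. Let $q:T\to T/S$ be the quotient, which is topologically a principal $S$-bundle.

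The main step, and the principal obstacle, is to produce an isogeny $\hat\alpha:\hat T\to T$, a torsion character $\rho_0\in\m(\hat T)$ pulled back from $\m(\hat T/\hat S)$, and a polarizable Hodge module $M'$ on $\hat T/\hat S$ (with $\hat S$ the preimage of $S$) admitting a $\zz$-structure, of geometric origin, and strictly supported on $\hat Y/\hat S$, such that
\[
\hat\alpha^*M \otimes L_{\rho_0} \;\cong\; \hat q^{-1}M'[\dim_\cc S]
\]
as perverse sheaves on $\hat T$, where $\hat q:\hat T\to\hat T/\hat S$ is the quotient. The argument combines Proposition~\ref{divisor} --- the singular locus of the underlying $\mathcal D$-module of $M$ on $Y$ is the pullback of a divisor on $Y/S$ and hence $S$-invariant --- with the rigidity of polarizable $\zz$-variations of Hodge structure on the torus fibers of $p_Y$: on each such fiber (a translate of $S$), the restriction of $M$ is a semisimple polarizable VHS with $\zz$-structure, whose underlying local system is therefore a direct sum of torsion characters; after passing to a suitable isogeny $\hat T\to T$ and twisting by one such character, the fiber restriction becomes trivial, and continuity in the family together with rigidity ensures that a single twist works uniformly, so the twisted pullback is constant along $\hat S$-orbits and descends along $\hat q$. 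The $\zz$-structure is preserved because the twisting character is torsion, and the geometric origin descends because the original $f:X\to T$ may be replaced by its composition with $X\to X/\hat S$-fibers at the level of Hodge modules.

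Once this descent is in hand, for any $L\in\m(\hat T)$ the projection formula and the principal-$\hat S$-bundle structure of $\hat q$ give $H^i(\hat T, L\otimes\hat q^{-1}M'[\dim_\cc S])=0$ unless $L$ is pulled back from $\hat T/\hat S$, and, when $L=\hat q^*L'$, $\dim H^i(\hat T, L\otimes\hat q^{-1}M'[\dim_\cc S])$ is a non-negative integer combination of the dimensions $\dim H^{i'}(\hat T/\hat S, L'\otimes M')$ with multiplicities coming from $H^*(\hat S,\cc)$. Consequently $\Sigma^i_k(\hat T,\hat\alpha^*M\otimes L_{\rho_0})$ lies in the image of $\hat q^\star$ and, viewed as a subset of $\m(\hat T/\hat S)$, equals a finite union of loci $\Sigma^{i'}_{k'}(\hat T/\hat S,M')$. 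Since $M'$ is supported in a translate of the abelian subvariety $A$, each such locus is the preimage under the restriction $r^\star:\m(\hat T/\hat S)\to\m(A)$ of $\Sigma^{i'}_{k'}(A,M'|_A)$, which by Schnell's theorem~\cite{sc} applied on the abelian variety $A$ to $M'|_A$ (still polarizable Hodge of geometric origin with $\zz$-structure) is a finite union of arithmetic subvarieties of $\m(A)$. Pulling back through $r^\star$ and $\hat q^\star$ (both of which carry arithmetic subvarieties to arithmetic subvarieties), translating by the torsion $\rho_0$, and applying Lemma~\ref{cover} to transport from $\hat T$ back to $T$ shows that $\Sigma^i_k(T,M)$ is a finite union of arithmetic subvarieties of $\m(T)$.
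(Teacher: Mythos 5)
Your overall strategy matches the paper's: use Proposition~\ref{fibre} to find the subtorus $S$, trivialize the monodromy of the variation of Hodge structure along the $S$-fibres by an isogeny, descend to $T/S$ (an abelian variety, or containing one), and invoke Schnell's theorem there before transporting back via Lemma~\ref{cover}. However, there is a genuine gap at the central step. You assert that the descended object $M'$ on $\hat T/\hat S$ is a polarizable Hodge module of geometric origin admitting a $\zz$-structure, but you give no mechanism for producing these structures on $M'$: descent of the underlying perverse sheaf along the principal $\hat S$-bundle $\hat q$ is a purely topological statement and does not by itself equip $M'$ with a Hodge module structure, a polarization, or a $\zz$-structure, and your one-line justification for geometric origin (``replace $f:X\to T$ by its composition with $X\to X/\hat S$-fibers'') is not a meaningful operation. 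The paper's way around this is to push forward: one computes $\mathbf{R}\hat p_{T*}(\alpha^*(M))\cong\bigoplus_i (M_0\otimes_\cc H^i(\hat S,\cc))[i]$ by the projection formula, and then applies Saito's decomposition theorem for \emph{proper K\"ahler morphisms} (this is exactly where the geometric-origin hypothesis is used) to conclude that this pushforward is a direct sum of shifts of perverse sheaves underlying polarizable Hodge modules on the abelian variety $T/S$; the $\zz$-structure survives because $\zz$-structures are preserved by $\mathbf{R}f_*$. Only then is Schnell's theorem applicable, to $M_0$ as a summand of the pushforward. Without some such argument your appeal to Schnell's theorem for $M'|_A$ is unjustified.

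A secondary confusion: the twist by a torsion character $\rho_0$ pulled back from $\m(\hat T/\hat S)$ is a no-op on the fibres of $\hat q$ (such characters restrict trivially to each fibre), so it cannot help trivialize the fibre monodromy; and a twist by a character that is nontrivial on the fibres would jeopardize the $\zz$-structure you need later. The correct move is that no twist is needed at all: the restriction $H|_F$ to a fibre is a direct sum of finitely many \emph{torsion} rank-one local systems (Schnell's Lemma 4.1, valid for complex tori), so a single isogeny $\hat S\to S$ kills all of them simultaneously, which is exactly what Lemma~\ref{trivial} of the paper does. One should also record, as the paper does, why one isogeny works uniformly over all of $U/S$ and why $U$ can be taken $S$-invariant (Proposition~\ref{divisor} plus Schmid's extension across codimension~$\geq 2$); you gesture at this with ``continuity in the family together with rigidity,'' which is acceptable in outline but should be made precise.
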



Since $Y$ is an irreducible analytic subvariety of $T$, by Proposition \ref{fibre}, there exists a subtorus $S$ of $T$ such that $Y$ is preserved by the action of $S$ and the quotient map $Y\to Y/S$ is a weak algebraic reduction. 

\begin{lemma}
Under the above notations, there exists an open subset $U$ of the smooth locus of $Y$ such that the restriction of $M$ to $U$ is isomorphic to a shift of a polarizable variation of Hodge structure $H$, and $U$ is preserved by the action of $S$. 
\end{lemma}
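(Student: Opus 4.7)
The plan is to combine Saito's structural theorem for pure Hodge modules with Proposition \ref{divisor}. Saito's theorem says that a polarizable Hodge module with strict support on an irreducible variety is, on the smooth locus of its support, the intermediate extension of a polarizable variation of Hodge structure along a divisor. Applying this to $M$, one obtains a (possibly empty) divisor $D$ in the smooth locus $Y^{\mathrm{sm}}$ and a polarizable VHS $H$ on $V := Y^{\mathrm{sm}} \setminus D$ such that $M|_V$ is isomorphic to a shift of $H$. The essential feature is that the non-VHS locus is of pure codimension one, so $D$ is genuinely a divisor.

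Next, I would show that $D$ is contained in the preimage under $p_Y$ of a proper closed analytic subset of $Y/S$. The argument of Proposition \ref{divisor} applies: for a very general $u \in Y/S$, the fibre $Y_u = p_Y^{-1}(u)$ is smooth, contained in $Y^{\mathrm{sm}}$, and isomorphic to the subtorus $S$, so the restriction $D_u := D|_{Y_u}$ is a Cartier divisor on $Y_u$. If $D_u$ were nontrivial and effective, the theorem of square on $Y_u \cong S$ would yield $\kappa(D_u, Y_u) \geq 1$, contradicting Theorem \ref{ueno0} applied to a smooth proper bimeromorphic model $\tilde Y \to Y$ (to which $D$ lifts as a Cartier divisor, and whose generic fibres over $Y/S$ are still isomorphic to $S$). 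Hence $D_u = \emptyset$ for very general $u$, so $D \subseteq p_Y^{-1}(\Delta)$ for some proper closed analytic subset $\Delta \subsetneq Y/S$.

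Finally, I would set $U := Y^{\mathrm{sm}} \setminus p_Y^{-1}(\Delta)$. Since $Y$ is preserved by the $S$-action and translations preserve singular loci, $Y^{\mathrm{sm}}$ is itself $S$-invariant; and $p_Y^{-1}(\Delta)$ is $S$-invariant by construction. Thus $U$ is an $S$-invariant open subset of $Y^{\mathrm{sm}}$, and $U \subseteq V$ guarantees that $M|_U$ is a shift of the polarizable VHS $H|_U$, as required. Nonemptiness of $U$ is automatic from $\Delta \subsetneq Y/S$.

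The main obstacle is the middle step: since $Y$ may be singular, Proposition \ref{divisor} cannot be applied directly to the closure $\bar D$ in $Y$, which in general will only be Weil and not Cartier. The remedy is the resolution argument indicated above — lift $D$ to a Cartier divisor on a smooth proper bimeromorphic model of $Y$, and carry out the theorem-of-square/Kodaira-dimension comparison on the smooth fibre $Y_u \cong S$ where Cartier-ness is automatic. Invoking Saito's intermediate-extension structure for polarizable Hodge modules strictly supported on $Y$ is otherwise standard.
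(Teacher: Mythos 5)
Your proposal is correct and follows essentially the same route as the paper: Saito's structure theorem (plus extension of the VHS across codimension $\geq 2$, which the paper gets from Schmid) reduces the problem to showing the boundary divisor in $Y_{\mathrm{reg}}$ is fibred over $Y/S$, which both you and the paper establish via the theorem of square and Theorem \ref{ueno0} --- the paper by citing Proposition \ref{divisor}, you by inlining its argument. Your extra step of passing to a smooth bimeromorphic model to handle the fact that the closure of the boundary divisor need only be Weil on a singular $Y$ is a legitimate refinement of a point the paper glosses over when invoking Proposition \ref{divisor}.
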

\begin{proof}
Denote the smooth loci of $Y$ and $Y/S$ by $Y_{\text{reg}}$ and $(Y/S)_{\text{reg}}$ respectively. Since the quotient map $Y\to Y/S$ is a principal $S$ bundle, $Y_{\text{reg}}$ is preserved by the action of $S$ and $(Y/S)_{\text{reg}}=Y_{\text{reg}}/S$ as subvarieties of $Y/S$. 

Since $Y$ is the support of every direct summand of $M$, by \cite[Theorem 3.21]{sa1} we can assume that $M$ is obtained from the intermediate extension of a polarizable variation of Hodge structure $H$ on $U$, where $U$ is a Zariski open subset of $Y_{\text{reg}}$. In other words, $M|_{V}\cong H[\dim Y]$. According to \cite[Proposition 4.1]{sm}, the polarizable variation of Hodge structure $H$ extends to all irreducible components of $Y_{\text{reg}}\setminus U$ of codimension at least two. Therefore, by possibly extending $U$, we can assume that every irreducible component of $Y_{\text{reg}}\setminus U$ has codimension one. Now, $U$ is preserved by the action of $S$ by Proposition \ref{divisor}. 
\end{proof}

To prove Theorem \ref{hodge}, we can assume that $Y$ generates $T$ without loss of generality.

\begin{lemma}\label{trivial}
Let $U$ and $H$ be defined as in the preceding lemma. Denote the natural quotient maps by $p_U: U\to U/S$ and $p_T: T\to T/S$. There exists a finite cover $\alpha: \hat{T}\to T$, satisfying the following,
\begin{enumerate}
\item $\alpha^{-1}(S)$ is connected;
\item the restriction of $\alpha_{U}^*(H)$ to any fibre of $p_U \circ\alpha_{U}: \hat{U}\to U/S$ has trivial monodromy actions,
\end{enumerate}
where $\hat{U}=\alpha^{-1}(U)$, and $\alpha_{U}$ is the restriction of $\alpha$ to $\hat{U}$. 
\end{lemma}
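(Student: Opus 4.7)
The plan is to reduce the statement to the finiteness of the monodromy of $H$ along a generic $S$-fibre, and then to assemble $\hat T$ by splitting the lattice $\Lambda_T := \pi_1(T)$.

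Since the preceding lemma ensures that every irreducible component of $Y_{\text{reg}}\setminus U$ has codimension one, Proposition \ref{divisor} identifies this divisor with the pullback $p_Y^{-1}(D)$ of a divisor $D\subset (Y/S)_{\text{reg}}$. Hence $U = p_Y^{-1}\bigl((Y/S)_{\text{reg}}\setminus D\bigr)$, and for every $u \in U/S$ the fibre $F_u := p_U^{-1}(u)$ equals the entire $S$-orbit $p_Y^{-1}(u)$, which is a translate of the compact complex torus $S$. Consequently $H|_{F_u}$ is a polarizable variation of Hodge structure on a compact complex torus carrying a $\zz$-structure, and its monodromy is a representation $\rho_u: \Lambda_S \to GL(V)$ of the abelian group $\Lambda_S := \pi_1(S)$.

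The central claim is that $\rho_u(\Lambda_S)$ is finite. Deligne's semisimplicity theorem for polarizable VHS on compact K\"ahler manifolds decomposes $H|_{F_u}$ into a direct sum of irreducible polarizable sub-VHSs, and since $\Lambda_S$ is abelian each $\cc$-irreducible factor is one-dimensional, corresponding to a character $\chi_i: \Lambda_S \to \cc^*$. By the Simpson correspondence on the compact torus $S$, a rank-one polarizable VHS must have vanishing Higgs field (its Hodge type is concentrated in a single degree), so each $\chi_i$ is unitary. On the other hand, the $\zz$-structure makes $\chi_i(\lambda)$ an eigenvalue of the integer matrix $\rho_u(\lambda) \in GL_n(\zz)$, hence an algebraic integer of absolute value one. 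Kronecker's theorem then forces each $\chi_i(\lambda)$ to be a root of unity, and so the finitely generated group $\rho_u(\Lambda_S)$ is a torsion subgroup of $GL_n(\cc)$, hence finite. Set $K_S := \ker \rho_u$, a finite-index subgroup of $\Lambda_S$.

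With this in hand, I construct $\alpha$ as follows. The short exact sequence
\[
0 \longrightarrow \Lambda_S \longrightarrow \Lambda_T \longrightarrow \pi_1(T/S) \longrightarrow 0
\]
splits because the quotient is a free abelian group, so $\Lambda_T = \Lambda_S \oplus L$ for some direct complement $L$. Let $K := K_S \oplus L \subset \Lambda_T$, a finite-index subgroup, and let $\alpha: \hat T \to T$ be the corresponding finite cover of complex tori. Because $K + \Lambda_S = \Lambda_T$, the number of components of $\alpha^{-1}(S) = \hat T \times_T S$ equals $[\Lambda_T : K+\Lambda_S] = 1$, establishing (1). For (2), the fibre of $p_U \circ \alpha_U$ over $u \in U/S$ is the finite cover of $F_u$ whose fundamental group inside $\pi_1(F_u)$ is $K \cap \pi_1(F_u) = K_S \subset \ker \rho_u$, so the monodromy of $\alpha_U^*(H)$ on this fibre is trivial.

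The hardest step is the finiteness assertion in the middle paragraph: Deligne's theorem delivers only semisimplicity, and promoting this to genuine unitarity for each rank-one summand is where the Hodge-theoretic content is used most substantively. Here the geometric-origin hypothesis plays a role, ensuring that the decomposition theorem applies so that each irreducible summand of $H|_{F_u}$ is itself a polarizable Hodge module, to which the Simpson correspondence can be applied. The rest of the argument is a formal lattice-theoretic manipulation.
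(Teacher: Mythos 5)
Your proof is correct and follows essentially the same route as the paper: where you re-derive the finiteness of the fibrewise monodromy via Deligne semisimplicity, rank-one unitarity and Kronecker's theorem, the paper simply cites Schnell's Lemma 4.1 (a $\zz$-VHS on a complex torus has torsion monodromy), and your lattice splitting $\Lambda_T=\Lambda_S\oplus L$ with $K=K_S\oplus L$ produces exactly the cover the paper constructs as $\hat{S}\times T/S\to S\times T/S\cong T$. (Only your closing remark is off: the geometric-origin hypothesis is not needed for this step, since Deligne's semisimplicity holds for any polarizable VHS; it is used later for the decomposition theorem in Proposition \ref{support}.)
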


\begin{proof}

Let $F$ be any fibre of $p_U: U\to U/S$. Since $H$ is a polarizable variation of Hodge structure with coefficients in $\zz$, so is its restriction to $F$. It is proved in Lemma 4.1 of \cite{sc} that the underlying local system of a polarizable variation of Hodge structure with coefficients in $\zz$ on an abelian variety is torsion (the direct sum of rank one local systems whose monodromy actions are all torsion). When the abelian variety is replaced by a complex torus, the proof works the same. Thus, $H|_F$ is a direct sum of rank one local systems whose monodromy actions are torsion. Therefore, there exists some finite covering map $\beta: \hat{F}\to F$, such that $\beta^*(H|_F)$ is a trivial rank $n$ local system. Notice that by choosing an origin, $F$ is isomorphic to the complex torus $S$. Hence, after choosing an origin, $\hat{F}$ becomes a complex torus, which we denote by $\hat{S}$. Moreover, by identifying $F$ with $S$  and $\hat{F}$ with $\hat{S}$, we can assume $\beta: \hat{S}\to S$ is a morphism of complex tori. 

As topological spaces or real Lie groups, $T\cong S\times T/S$. The underlying topological space of $\hat{T}$ is defined to be $\hat{S}\times T/S$. The composition
$$\hat{T}\cong \hat{S}\times T/S\stackrel{\beta\times\id}{\longrightarrow} S\times T/S\cong T$$
is a covering map, which we set to be $\alpha$. We define the complex structure on $\hat{T}$ by setting the covering map to be holomorphic. 

By construction, the composition $p_T\circ \alpha: \hat{T}\to T/S$ is a principal $\hat{S}$-fibration, and hence (1) follows. The restriction of $\alpha_U^*(H)$ to some fibre $\hat{p}$ of the fibration is isomorphic to the local system $\beta^*(H|_F)$ on $\hat{F}$. Here, we identify the fibre and $\hat{F}$ by a translation in $\hat{T}$. Thus (2) follows from the fact that $\beta^*(H|_F)$ is a trivial local system on $\hat{F}$. 

\end{proof}



\begin{proof}[Proof of Proposition \ref{support}]
Let $\hat{T}$ and $\alpha: \hat{T}\to T$ be defined as in Lemma \ref{trivial}. According to Lemma \ref{cover}, it suffixes to prove each $\Sigma^i_k(\hat{T}, \alpha^*(M))$ is a finite union of arithmetic subvarieties. Let $\hat{Y}=\alpha^{-1}(Y)$. Since $\alpha$ is a covering map, $\alpha^*(M)$ underlies a polarizable Hodge module with every direct summand strictly supported on $\hat{Y}$. Moreover, $\alpha^*(M)$ is the intermediate extension of the polarizable variation of Hodge structure $\alpha_U^*(H)[\dim Y]$. 

We have the following diagram with all vertical arrows being inclusions. 
\begin{equation*}
\xymatrix{
\hat{U}\ar[r]^{\alpha_U}\ar[d]&U\ar[r]^{p_U}\ar[d]&U/S\ar[d]\\
\hat{Y}\ar[r]^{\alpha_Y}\ar[d]&Y\ar[r]^{p_Y}\ar[d]&Y/S\ar[d]\\
\hat{T}\ar[r]^{\alpha_T}&T\ar[r]^{p_T}&T/S
}
\end{equation*}
Denote the composition $p_U\circ \alpha_U$ by $\hat{p}_U$, and similarly $p_T\circ \alpha_Y$ by $\hat{p}_Y$, $p_T\circ \alpha_T$ by $\hat{p}_T$. By Lemma \ref{trivial} (2), the restriction of $\hat{p}_U^*(H)$ to any fibre of $\hat{p}_U: \hat{U}\to U/S$ is a trivial rank $n$ local system. Therefore, $\hat{p}_U^*(H)$ is isomorphic to the pull-back of some local system $H_0$ on $U/S$. Denote the intermediate extension of $H_0[\dim Y]$ to $T/S$ by $M_0$. Since locally the principal $\hat{S}$-fibration $\hat{p}_T: \hat{T}\to T/S$ can be considered as a Cartesian product, and since taking intermediate extension commutes with taking Cartesian product, in the derived category of bounded constructible complexes $D^b_c(\cc_X)$,
\begin{equation}\label{iso1}
\alpha^*(M)\cong \hat{p}_T^*(M_0). 
\end{equation}

By projection formula,
\begin{equation}\label{iso2}
\mathbf{R}\hat{p}_{T*}(\hat{p}_T^*(M_0))\cong M_0\stackrel{\mathbf{L}} {\otimes}_{\cc_{T/S}}\mathbf{R}\hat{p}_{T*}(\cc_{\hat{T}}).
\end{equation}
Since $\hat{p}_T: \hat{T}\to T/S$ is a trivial topological fibration, 
$$
\mathbf{R}\hat{p}_{T*}(\cc_{\hat{T}})\cong \bigoplus_{0\leq i\leq 2\dim S'}\left(\cc_{T/S}\otimes_\cc H^i(S', \cc)\right)[i].
$$
Thus, 
\begin{equation}\label{iso3}
M_0\stackrel{\mathbf{L}} {\otimes}_{\cc_{T/S}}\mathbf{R}\hat{p}_{T*}(\cc_{\hat{T}})\cong \bigoplus_{0\leq i\leq 2\dim S'} \left(M_0\otimes_\cc H^i(S', \cc)\right)[i]. 
\end{equation}
Now, combining (\ref{iso1}), (\ref{iso2}) and (\ref{iso3}), we have 
\begin{equation}\label{iso4}
\mathbf{R}\hat{p}_{T*}(\alpha^*(M))\cong \bigoplus_{0\leq i\leq 2\dim S'} \left(M_0\otimes_\cc H^i(S', \cc)\right)[i]. 
\end{equation}

Recall that we have assumed $M$ admits a $\zz$-structure. Since $\zz$-structure is preserved under the standard operations (see \cite{sc}), $\mathbf{R}\hat{p}_{T*}(\alpha^*(M))$ also admits a $\zz$-structure. We have assumed that $Y$ generates $T$. Therefore, $Y/S$ generate $T/S$. By Lemma \ref{generate}, $T/S$ is an abelian variety. Now, by the decomposition theorem of Saito \cite{sa}, $\mathbf{R}\hat{p}_{T*}(\alpha^*(M))$ is a direct sum of twists of perverse sheaves that underlie polarizable Hodge modules on $T/S$ with $\zz$-structures. Theorem 2.2 of \cite{sc} and Lemma \ref{summand} implies that $\Sigma^i_k(T/S, \mathbf{R}\hat{p}_{T*}(\alpha^*(M)))$ is a finite union of arithmetic subvarieties. Thus, by (\ref{iso4}), 
$$
\Sigma^i_k\left(T/S, \,\bigoplus_{0\leq i\leq 2\dim S'} \left(M_0\otimes_\cc H^i(S', \cc)\right)[i]\right)
$$
is a finite union of arithmetic subvarieties. By Lemma \ref{summand}, $\Sigma^i_k(T/S, M_0)$ is a finite union of arithmetic subvarieties. 

\begin{claim}
Let $\hat{p}_T^\star: \m(T/S)\to \m(\hat{T})$ be the morphism on the character varieties induced by $\hat{p}_T: \hat{T}\to T/S$. Denote by $M_1$ the direct sum $\bigoplus_{0\leq i\leq 2\dim S'} (M_0\otimes_\cc H^i(S', \cc))[i]$. Then,
$$
\Sigma^i_k\left(\hat{T}, \alpha^*(M)\right)=\hat{p}_T^\star\left(\Sigma^i_k(T/S, M_1)\right).
$$
\end{claim}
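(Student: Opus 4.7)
The plan is to prove both inclusions by a direct cohomology calculation, using the structural facts established earlier. By the construction of $\hat{T}$ in Lemma \ref{trivial}, the underlying topological space of $\hat{T}$ is the product $\hat{S}\times T/S$, with $\hat{p}_T$ being the second projection; denote it $q_2$, and let $q_1: \hat{T}\to \hat{S}$ denote the topological first projection. Consequently $\pi_1(\hat{T})\cong \pi_1(\hat{S})\times \pi_1(T/S)$, every character $\hat\rho\in \m(\hat{T})$ splits uniquely as $\hat\rho=\chi\cdot \rho'$ with $\chi\in \m(\hat{S})$ and $\rho'\in \m(T/S)$, and correspondingly $L_{\hat\rho}\cong q_1^*(L_\chi)\otimes_\cc q_2^*(L_{\rho'})$. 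By definition, the image of $\hat{p}_T^\star$ is exactly the locus where $\chi$ is trivial.

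Using the isomorphism $\alpha^*(M)\cong \hat{p}_T^*(M_0)=q_2^*(M_0)$ from (\ref{iso1}), the projection formula gives
\[
\mathbf{R}\hat{p}_{T*}\bigl(L_{\hat\rho}\otimes_\cc \alpha^*(M)\bigr)\;\cong\; \mathbf{R}\Gamma(\hat{S}, L_\chi)\otimes_\cc \bigl(L_{\rho'}\otimes_\cc M_0\bigr).
\]
If $\chi$ is nontrivial, then $\mathbf{R}\Gamma(\hat{S}, L_\chi)=0$ by the classical vanishing of cohomology of a non-trivial rank one local system on a torus. Hence $H^i(\hat{T}, L_{\hat\rho}\otimes \alpha^*(M))=0$ for all $i$, and so $\hat\rho\notin \Sigma^i_k(\hat{T}, \alpha^*(M))$ for any $k\geq 1$. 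This forces $\Sigma^i_k(\hat{T}, \alpha^*(M))\subseteq \hat{p}_T^\star(\m(T/S))$.

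If $\chi$ is trivial, so that $\hat\rho=\hat{p}_T^\star(\rho')$, then substituting $\mathbf{R}\Gamma(\hat{S}, \cc)\cong \bigoplus_j H^j(\hat{S}, \cc)[-j]$ and taking hypercohomology yields
\[
\dim H^i\bigl(\hat{T}, L_{\hat\rho}\otimes_\cc \alpha^*(M)\bigr)\;=\; \sum_j \dim H^{i-j}(T/S, L_{\rho'}\otimes_\cc M_0)\cdot \dim H^j(\hat{S}, \cc),
\]
which, unpacking the definition of $M_1=\bigoplus_j \bigl(M_0\otimes_\cc H^j(\hat{S},\cc)\bigr)[j]$, equals $\dim H^i(T/S, L_{\rho'}\otimes_\cc M_1)$. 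Therefore $\hat{p}_T^\star(\rho')\in \Sigma^i_k(\hat{T}, \alpha^*(M))$ if and only if $\rho'\in \Sigma^i_k(T/S, M_1)$, which supplies the reverse inclusion and the surjectivity onto the image needed for the claimed equality (the case $k=0$ being either trivial or interpreted after restricting to the nontrivial jump locus).

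The main obstacle is largely bookkeeping: one must check that the projection formula and base change are applied in the derived category with a genuinely topological product structure on $\hat{T}$ (which is precisely what was arranged in Lemma \ref{trivial}), and that the vanishing $\mathbf{R}\Gamma(\hat{S}, L_\chi)=0$ for nontrivial $\chi$ is robust enough to annihilate the entire tensor product. Both of these are routine once the product decomposition of $\hat{T}$ and the identity $\alpha^*(M)\cong \hat{p}_T^*(M_0)$ are in hand.
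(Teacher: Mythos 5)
Your proposal is correct and follows essentially the same route as the paper: both reduce to computing $\mathbf{R}\hat{p}_{T*}(L\otimes\alpha^*(M))$ via the isomorphism $\alpha^*(M)\cong\hat{p}_T^*(M_0)$ and the projection formula, then split into the two cases according to whether the restriction of $L$ to the fibre (your character $\chi$) is trivial, using the vanishing of cohomology of a nontrivial rank one local system on a torus in the first case and a K\"unneth-type count matching $M_1$ in the second. Your explicit use of the product decomposition $\hat{T}\cong\hat{S}\times T/S$ and the splitting $L_{\hat\rho}\cong q_1^*(L_\chi)\otimes q_2^*(L_{\rho'})$ is just a more explicit phrasing of the paper's fibrewise triviality criterion, and your remark about the $k=0$ convention is a reasonable clarification.
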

\begin{proof}[Proof of Claim]
Let $L$ be any rank one local system on $\hat{T}$. Since $H^i(\hat{T}, L\otimes_{\cc_{\hat{T}}}\alpha^*(M))$ can be computed as the $i$-th derived push forward of $L\otimes_{\cc_{\hat{T}}}\alpha^*(M)$ under the map $\hat{T}\to \{\text{point}\}$, there is a natural isomorphism
$$
H^i\left(\hat{T}, L\otimes_{\cc_{\hat{T}}}\alpha^*(M)\right)\cong H^i\left(T/S, \mathbf{R}\hat{p}_{T*}\left(L\otimes_{\cc_{\hat{T}}}\alpha^*(M)\right)\right).
$$
On the other hand, $\alpha^*(M)\cong \hat{p}_T^*(M_0)$. By projection formula, 
\begin{equation}\label{iso0}
\mathbf{R}\hat{p}_{T*}\left(L\otimes_{\cc_{\hat{T}}}\hat{p}_T^*(M_0)\right)\cong \mathbf{R}\hat{p}_{T*}(L)\stackrel{\mathbf{L}}{\otimes}_{\cc_{T/S}}M_0.
\end{equation}

If the restriction of $L$ to the fibre of $\hat{p}_T: \hat{T}\to T/S$ is a trivial local system, or equivalently if there exists a rank one local system $L_0$ on $T/S$ such that $L\cong \hat{p}_T^*(L_0)$, then by projection formula, 
\begin{equation}\label{iso-1}
\mathbf{R}\hat{p}_{T*}(L)\cong \bigoplus_{0\leq i\leq 2\dim S'} L_0\otimes_\cc H^i(S', \cc)[i]. 
\end{equation}
If the restriction of $L$ to the fibre of $\hat{p}_T: \hat{T}\to T/S$ is not trivial, then 
\begin{equation}\label{iso-2}
\mathbf{R}\hat{p}_{T*}(L)\cong 0.
\end{equation}
Now a direct computation shows that the claim follows from (\ref{iso1}), (\ref{iso0}), (\ref{iso-1}) and (\ref{iso-2}). 
\end{proof}
We have shown that $\Sigma^i_k(T/S, M_0)$ is a finite union of arithmetic subvarieties for any $i\in \zz$ and $k\in \nn$. Since $M_1$ is a direct sum of some twists of $M_0$, by Lemma \ref{summand}, $\Sigma^i_k(T/S, M_1)$ is also a finite union of arithmetic subvarieties for any $i\in \zz$ and $k\in \nn$. Since $\hat{p}_T^\star: \m(T/S)\to \m(\hat{T})$ is a morphism of abelian algebraic groups induced by the morphism of complex tori $\hat{p}_T$, $\hat{p}_T^\star\left(\Sigma^i_k(T/S, M_1)\right)$ is a finite union of arithmetic subvarieties. Hence, the claim implies that $\Sigma^i_k(\hat{T}, \alpha^*(M))$ is a finite union of arithmetic subvarieties. Thus, by Lemma \ref{cover}, we have completed the proof. 
\end{proof}

\begin{proof}[Proof of Theorem \ref{hodge}]
Let $M$ be the perverse sheaf as in Theorem \ref{hodge} (without assumption on its support). Since $M$ underlies a polarizable Hodge module, it decomposes as $M=\bigoplus_{1\leq \nu\leq l} M_\nu$, where each $M_\nu$ is a direct sum of perverse sheaves that underlie simple polarizable Hodge modules, and each $M_\nu$ is strictly supported on an irreducible subvariety $Y_\nu$ of $T$. This decomposition is a priori only over $\qq$. We need to show the decomposition is indeed over $\zz$, to reduce to Proposition \ref{support}. 

Without loss of generality, we can assume that $Y_1$ is not contained in any other $Y_\nu$. Let $V=Y_1-\bigcup_{2\leq \nu\leq l}Y_\nu$. Then $M|_{V}=M_1|_V$. Since $M_1$ is a semisimple perverse sheaf, and since every direct summand of $M_1$ has the same support $Y_1$, $M_1=j_{!*}(M|_V)$, where $j: V\to T$ is the inclusion. Since $M|_V$ admits a $\zz$-structure, $M_1=j_{!*}(M|_V)$ admits a $\zz$-structure too. Assume the $\zz$-structure on $M$ is given by $M\cong M_\zz\otimes_\zz \cc$, where $M_\zz$ is a constructible complex defined over $\zz$. Then as the quotient, $$
M/M_1\cong \left(M_\zz/j_{!*}(M_\zz|_V)\right)\otimes_\zz \cc.
$$
Therefore, $\bigoplus_{2\leq \nu\leq l}M_\nu\cong M/M_1$ admits a $\zz$-structure. By induction, we can conclude that every $M_\nu$ admits a $\zz$-structure. By Proposition \ref{support}, each $\Sigma^i_k(T, M_\nu)$ is a finite union of arithmetic subvarieties, and hence by Lemma \ref{summand}, $\Sigma^i_k(T, M)$ is a finite union of arithmetic subvarieties for any $i\in \zz$ and $k\in \nn$. 
\end{proof}

\begin{proof}[Proof of Theorem \ref{torsion}]
Let $a: X\to \alb(X)$ be the Albanese map. Denote the connected component of $\m(X)$ containing the trivial character by $\m^0(X)$. Then $a$ induces an isomorphism between the character varieties $a^\star: \m(\alb(X))\to \m^0(X)$. For any local system $L$ on $\alb(X)$, 
$$
H^i(\alb(X), \mathbf{R}a_*(\cc_X)\otimes_\cc L)\cong H^{i}(X, \alpha^*(L)).
$$
Therefore, 
$$
a^\star\left(\Sigma^i_k\left(\alb(X), \mathbf{R}a_*(\cc_X[\dim X])\right)\right)=\Sigma^{i+\dim X}_k(X)\cap \m^0(X). 
$$
According to \cite{sa} and \cite{ps}, 
$$\mathbf{R}a_*(\cc_X[\dim X])\cong \bigoplus_{|i|\leq\delta}P_i[-i]$$
where $\delta$ is the defect of semismallness of $a$, and each $P_i$ is a perverse sheaf that underlies a polarizable Hodge module of geometric origin. By \cite[Lemma 1.10]{sc}, each $P_i$ admits a $\zz$-structure. Hence, by Theorem \ref{hodge} and Lemma \ref{summand}, $\Sigma^i_k(\alb(X), \mathbf{R}a_*(\cc_X[\dim X]))$ is a finite union of arithmetic subvarieties for any $i\in \zz$, $k\in \nn$. Therefore, $\Sigma^{i}_k(X)\cap \m^0(X)$ is a finite union of arithmetic subvarieties for any $i, k\in \nn$. Suppose $H_1(X, \zz)$ does not contain any torsion element. Then $\m(X)=\m^0(X)$, and hence the theorem is proved. 

If $H_1(X, \zz)$ has torsion elements, we can always find a finite cover $f: \hat{X}\to X$, such that the image of $f_*: H_1(\hat{X}, \zz)\to H_1(X, \zz)$ does not contain any torsion element. For such $f$, the image of the $f^\star: \m(X)\to \m(\hat{X})$ is contained in $\m^0(\hat{X})$. For example, we can always take the cover corresponding to the free part of $H_1(X, \zz)$. However, we can not guarantee that $f^\star: \m^0(X)\to \m^0(\hat{X})$ is an isomorphism. By the computation in the proof of Lemma \ref{cover}, 
\begin{equation}\label{eq}
(f^\star)^{-1}\left(\Sigma^i_k(\hat{X})\cap \im(f^\star)\right)=\Sigma^i_k\left(X, \bigoplus_{\tau\in \ker(f^\star)}L_\tau\right).
\end{equation}
Since the finite cover $\hat{X}$ of $X$ is also a compact K\"ahler manifold, $\Sigma^i_k(\hat{X})\cap \m^0(\hat{X})$ is a finite union of arithmetic subvarieties for any $i, k\in\nn$. Moreover, $\im(f^\star)\subset \m^0(\hat{X})$. Therefore, $\Sigma^i_k(\hat{X})\cap \im(f^\star)$ is a finite union of arithmetic subvarieties. $f$ being a finite covering map implies $f_*: H_1(\hat{X}, \cc)\to H_1(X, \cc)$ is surjective, and hence $f^\star$ is a finite covering map from $\m(X)$ to $\im(f^\star)$. Therefore, $\Sigma^i_k\left(X, \bigoplus_{\tau\in \ker(f^*)}L_\tau\right)$ is also a finite union of arithmetic subvarieties for any $i, k\in \nn$. By Lemma \ref{summand}, for any $\tau\in \ker(f^*)$, $\Sigma^i_k(X, L_\tau)$ is a finite union of arithmetic subvarieties. Since the trivial character $\mathbb{1}\in \m(X)$ is in $\ker(f^*)$,  $\Sigma^i_k(X, L_\mathbb{1})=\Sigma^i_k(X)$ is a finite union of arithmetic subvarieties for any $i, k\in \nn$. 
\end{proof}

\section{Topology of compact K\"ahler manifolds and smooth complex projective varieties}

According to a beautiful result Voisin \cite{v}, there exists a compact K\"ahler manifold that is not of the (real) homotopy type of any smooth complex projective variety. We will construct such a compact K\"ahler manifold $X$ that is similar to the one of Voisin. Using the same method as Voisin, we will construct subtori in $\pic^0(X)$, which force $\pic^0(X)$ not to be an abelian variety. The construction uses cohomology jump loci, and is homotopy invariant. Hence we can conclude that any compact K\"ahler manifold of the same homotopy type as $X$ is not projective. 

\begin{prop}\label{example}
There exists a compact K\"ahler manifold $X$, satisfying the following. If $X'$ is another compact K\"ahler manifold, and if there exists an isomorphism $f: \m(X)\to \m(X')$, which induces an isomorphism between $\Sigma^2_1(X)$ and $\Sigma^2_1(X')$, then $X'$ is not projective. 
\end{prop}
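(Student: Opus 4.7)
The plan is to construct a compact K\"ahler manifold $X$ whose cohomology jump locus $\Sigma^2_1(X)$ encodes enough sub-Hodge-structure data on $H^1(X, \qq)$ to rule out polarizability, so that any K\"ahler $X'$ with matching jump locus cannot be projective. Following Voisin \cite{v}, I would first select a $\qq$-vector space $V$ together with a $\qq$-subspace $W \subset V$ such that (a) some non-polarizable weight-one Hodge structure on $V$ admits $W$ as a sub-Hodge structure, but (b) no polarizable weight-one Hodge structure on $V$ admits $W$ as a sub-Hodge structure. Voisin's mechanism produces such $W$ from the graph of an endomorphism $\Phi$ whose eigenvalues are algebraic numbers with Galois orbits not preserved under complex conjugation; compatibility of $\Phi$ with any polarization would force an arithmetic symmetry on its eigenvalues that is violated by construction.

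Next I would realize this datum geometrically. Take a complex torus $T$ with $H^1(T, \qq) = V$ realizing a Hodge structure as in (a), and let $S \subset T$ be the subtorus corresponding to the quotient $V \twoheadrightarrow V/W$. Construct a compact K\"ahler manifold $X$ with Albanese map $a: X \to T = \alb(X)$ such that
$$
\Sigma^2_1(X) \;=\; \bigcup_\alpha \rho_\alpha\cdot a^\star\bigl(q_\alpha^\star(\m(T_\alpha))\bigr),
$$
where the collection of quotients $q_\alpha: T \to T_\alpha$ includes $p: T \to T/S$, so that the prescribed subspace $W$ appears as one of the sub-Hodge structures cut out by $\Sigma^2_1$, and the torsion translates $\rho_\alpha$ are irrelevant for the argument. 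A natural candidate is to take $X$ as a suitable blow-up of $T$ along a subvariety pulled back from $T/S$, or as a fiber bundle over $T$ with fibers built from auxiliary smooth projective curves of genus at least two, chosen so that the Green--Lazarsfeld--Arapura structure theorem identifies the prescribed subtori as the irreducible components of $\Sigma^2_1(X)$ with no extraneous components.

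Now let $X'$ be a compact K\"ahler manifold and $f: \m(X) \to \m(X')$ an isomorphism with $f(\Sigma^2_1(X)) = \Sigma^2_1(X')$. Then $f$ identifies the free parts of $H_1(X, \zz)$ with those of $H_1(X', \zz)$, and therefore identifies $V$ with $H_1(X', \qq)_{\text{free}}$. By the structure theorem of Green--Lazarsfeld and Arapura \cite{gl, a} applied to the K\"ahler manifold $X'$, each irreducible component of $\Sigma^2_1(X')$ is a translate of $\im(g^\star)$ for some morphism $g: X' \to T'$ to a complex torus, so its underlying $\qq$-subspace of $H^1(X', \qq)$ is a sub-Hodge structure of integral type. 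Via $f$, the subspace $W$ appears as one of these sub-Hodge structures of $H^1(X', \qq)$. If $X'$ were projective, then $H^1(X', \qq)$ would be polarizable, contradicting property (b). Hence $X'$ is not projective.

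The main obstacle is the geometric realization: producing an explicit compact K\"ahler $X$ whose $\Sigma^2_1(X)$ is precisely the prescribed union of linear subvarieties, with no spurious components that would weaken the obstruction. This parallels Voisin's blow-up constructions \cite{v} and requires a careful computation of the cohomology jump loci of a blow-up (via Mayer--Vietoris or the long exact sequence of a pair) to confirm that only the designed components appear; the Hodge-theoretic rigidity of step one is a direct adaptation of Voisin's Galois-theoretic eigenvalue argument and is the conceptual heart of the construction.
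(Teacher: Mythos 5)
Your overall strategy---build $X$ as a Voisin-style blow-up whose Albanese is a non-algebraic torus, arrange that the components of $\Sigma^2_1(X)$ correspond to prescribed quotient tori, transfer these components to $X'$ via $f$, and read off a Hodge-theoretic obstruction to projectivity---is the same as the paper's. But your ``conceptual heart'' (step one) contains a genuine error: there is no $\qq$-subspace $W\subset V$ satisfying your property (b). For $W$ to underlie a weight-one sub-Hodge structure it must have even $\qq$-dimension; but then, choosing any $\qq$-splitting $V=W\oplus W'$ and putting arbitrary polarizable weight-one Hodge structures on $W$ and $W'$ separately, the direct sum is a polarizable weight-one Hodge structure on $V$ admitting $W$ as a sub-Hodge structure. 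So a \emph{single} sub-Hodge structure can never obstruct polarizability, and the final contradiction in your argument evaporates.

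The obstruction in Voisin's argument, and in the paper, is not one subspace but the simultaneous configuration of \emph{four} sub-Hodge structures of $H^1(T\times T)$: the two factors, the diagonal, and the graph of an endomorphism $\phi$. The paper blows up $X_0=T\times T\times \mathbf{P}^1$ along $Z_i\times\{P_i\}$ for these four subtori $Z_i$ placed over four distinct points of $\mathbf{P}^1$ (the $\mathbf{P}^1$ factor keeps the centers disjoint), computes $\mathbf{R}a_*\cc_X$ explicitly, and finds that $\Sigma^2_1(X)=\bigcup_{i}p_i^\star\m(B_i)$ with $B_i=(T\times T)/Z_i$. On $X'$, the four transferred components force $\pic^0(X')\cong T^\vee\times T^\vee$ (from the two projections and the diagonal) \emph{together with} an endomorphism $\phi^\vee$ of $T^\vee$ having the same characteristic polynomial as $\phi^*$; it is this endomorphism whose Galois-generic characteristic polynomial is incompatible with the Rosati involution coming from a polarization (Lemma 5.2 of the paper). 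To repair your proof you must replace the single-subspace obstruction by this multi-component reconstruction of an endomorphism, and make the construction of $X$ explicit enough to verify that no extraneous components of $\Sigma^2_1(X)$ appear.
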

The example of $X$ will be quite similar to the one of Voisin \cite{v}. We recall a lemma there. 

\begin{lemma}[\cite{v}]\label{voisin}
Let $T$ be a complex torus of dimension at least two, and let $\phi$ be an endomorphism of $T$. Suppose the characteristic polynomial $f$ of $\phi^*: H^1(T, \zz)\to H^1(T, \zz)$ satisfies the property that the Galois group of its splitting field acts as the symmetric group on the roots of $f$. Then $T$ is not an abelian variety. 
\end{lemma}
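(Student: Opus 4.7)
The plan is to prove the contrapositive: assuming $T$ is an abelian variety, I will derive a contradiction by showing that under the hypothesis on $\phi$, the space $H^{1,1}(T) \cap H^2(T, \qq)$ of rational Hodge $(1,1)$-classes vanishes. Since an abelian variety admits a polarization, which is a nonzero rational class of type $(1,1)$, this vanishing is incompatible with $T$ being abelian.

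First I would set up the eigen-decomposition of $H^1$ under $\phi^*$. Let $L$ be the splitting field of $f$ and $G = \text{Gal}(L/\qq) \cong S_{2n}$ with $n = \dim T$. Since $G$ acts transitively on the $2n$ roots of $f$, the polynomial is irreducible, and since $|G| = (2n)!$ the action is faithful, so the roots are distinct. Hence $\phi^*$ acts on $V_L := H^1(T, L)$ with $2n$ one-dimensional eigenspaces, and I fix nonzero eigenvectors $v_i$ with eigenvalues $\mu_i$. Because $\phi$ is holomorphic, the Hodge decomposition $V_\cc = H^{1,0} \oplus H^{0,1}$ is preserved by $\phi^*$, so $H^{1,0}$ is spanned by $\{v_i\}_{i \in \Phi}$ for some $\Phi \subset \{1, \ldots, 2n\}$ with $|\Phi| = n$. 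Complex conjugation sends $v_i$ to a scalar multiple of $v_{\pi(i)}$ where $\mu_{\pi(i)} = \overline{\mu_i}$, and since it exchanges $H^{1,0}$ and $H^{0,1}$, the involution $\pi$ sends $\Phi$ to $\Phi^c$.

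Next I would analyze rational $(1,1)$-classes in the basis $\{v_i \wedge v_j\}_{i<j}$ of $\wedge^2 V_L = H^2(T, L)$. Writing $\omega = \sum_{i<j} a_{ij}\, v_i \wedge v_j$ with $a_{ij} \in L$, the condition that $\omega$ lies in $H^{1,1}(T)$ means $a_{ij} = 0$ unless the pair $\{i,j\}$ is ``mixed'', i.e., one index in $\Phi$ and the other in $\Phi^c$. The condition that $\omega$ lies in $H^2(T, \qq) \subset H^2(T, L)$ means $\omega$ is $G$-invariant. Since $G$ permutes eigenspaces according to its permutation of roots, for any $g \in G$ one has $g(v_i) = c_g(i)\, v_{g(i)}$ with $c_g(i) \in L^\times$, so $g$ takes $v_i \wedge v_j$ to a nonzero scalar multiple of $v_{g(i)} \wedge v_{g(j)}$. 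Consequently the support $S = \{\{i,j\} : a_{ij} \neq 0\}$ is a $G$-stable subset of $\binom{\{1, \ldots, 2n\}}{2}$.

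The final step is a transitivity argument: $G = S_{2n}$ acts transitively on unordered pairs of $\{1, \ldots, 2n\}$, so the only $G$-stable subsets are $\emptyset$ and the whole set. For $\omega$ of type $(1,1)$ we have $S \subset M$, where $M$ is the set of mixed pairs; since $n \geq 2$, there exist nonmixed pairs within $\Phi$, so $M$ is a proper subset, forcing $S = \emptyset$ and hence $\omega = 0$. This gives $H^{1,1}(T) \cap H^2(T, \qq) = 0$, the desired contradiction. The main technical concern is handling the $L^\times$-valued scalars $c_g(i)$ in the Galois action on the chosen eigenvectors together with the sign arising from reordering $v_{g(i)} \wedge v_{g(j)}$; but since both are nonzero, they do not affect the vanishing of the coefficients $a_{ij}$, and the argument reduces cleanly to the purely combinatorial transitivity of $S_{2n}$ on unordered pairs.
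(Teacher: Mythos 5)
The paper does not actually prove this lemma---it is quoted from Voisin \cite{v}---and your argument is essentially Voisin's: diagonalize $\phi^*$ over the splitting field, note that a rational class of type $(1,1)$ has Galois-stable support contained in the set of ``mixed'' pairs (one index in $\Phi$, one in $\Phi^c$), and use transitivity of $S_{2n}$ on unordered pairs to conclude $H^{1,1}(T)\cap H^2(T,\qq)=0$, contradicting the existence of a polarization on an abelian variety. The only point to tighten is your opening claim that the roots are distinct: as written it is circular (you invoke transitivity on ``the $2n$ roots'' and $|G|=(2n)!$, which already presuppose $2n$ distinct roots); distinctness should instead be read as part of the hypothesis---the Galois group acts as $S_{2n}$ on $2n$ roots, as in Voisin's setup---since under the degenerate reading where $f$ is a proper power of a lower-degree polynomial the statement is literally false (e.g.\ $T=E\times E$ with $\phi$ multiplication by an integer, $f=(t-m)^4$), while once distinctness is granted, irreducibility is not needed anywhere in your argument.
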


\begin{proof}[Proof of Proposition \ref{example}]
We first construct $X$. Let $T$ be the complex torus as in the lemma. Define $X_0=T\times T\times \mathbf{P}^1_{\cc}$. Take four distinct points $P_i$, $1\leq i\leq 4$, in $\mathbf{P}^1_{\cc}$. We denote by $Z_i, 1\leq i\leq 4$ the following four subvarieties of $T\times T$, $T\times \{\id\}$, $\{\id\}\times T$, the diagonal in $T\times T$ and the graph of $\phi$. Let $X$ be the blow-up of $X_0$ along $\bigcup_{1\leq i\leq 4}Z_i\times \{P_i\}$. Recall that Voisin's example was to blowup each $Z_i$ in $T\times T$. But before these blowups, we have to blowup their intersections first. We introduce the $\mathbf{P}^1_{\cc}$ to avoid the blowup of the intersections. This construction makes the later computation easier.

By our construction, there is a natural isomorphism $\gamma: \alb(X)\to T\times T$. From now on, we will identify $\alb(X)$ and $T\times T$ via $\gamma$. Denote $T\times T/ Z_i$ by $B_i$, and denote the quotient maps by $p_i: T\times T\to B_i$, $1\leq i\leq 4$. 
\begin{claim}
$$\Sigma^2_1(X)=\bigcup_{1\leq i\leq 4}p_i^\star \m(B_i).$$
\end{claim}
\begin{proof}[Proof of the Claim]
Let $M=\mathbf{R}a_*(\mathbb{C}_X)$. Since we have identified $\alb(X)$ with $T\times T$, by projection formula, $\Sigma^2_1(X)=\Sigma^2_1(T\times T, M)$. 

Notice that the Albanese map $a: X\to T\times T$ is equal to the composition of the blowing up map $q: X\to X_0$ and the projection $p: X_0\to T\times T$. Hence, $M=\mathbf{R}p_*(\mathbf{R}q_*(\mathbb{C}_X))$. By abuse of notation, we denote the push-forward of the constant sheaf $\cc_{Z_i}$ under the embedding $Z_i\to T\times T$ by $\cc_{Z_i}$ also. Recall that $X_0=T\times T\times \mathbf{P}^1_{\cc}$ and $q: X\to X_0$ is the blowup along disjoint centers $Z_i\times \{P_i\}$, $1\leq i\leq 4$. Let $d=\dim T$. One can easily compute 
$$M=\Big(\bigoplus_{\substack{1\leq i\leq 4\\ 1\leq j\leq d}}\cc_{Z_i}[-2j]\Big)\oplus\cc_{T\times T}\oplus\cc_{T\times T}[-2].$$
Thus, 
\begin{equation*}
\begin{split}
\Sigma^2_1(T\times T, M)&=\Big(\bigcup_{\substack{1\leq i\leq 4\\ 1\leq j\leq d}}\Sigma^2_1(T\times T, \cc_{Z_i}[-2j])\Big)\cup \Sigma^2_1(T\times T, \cc_{T\times T})\cup \Sigma^2_1(T\times T, \cc_{T\times T}[-2])\\
&=\Big(\bigcup_{\substack{1\leq i\leq 4\\ 1\leq j\leq d}}\Sigma^{2-2j}_1(T\times T, \cc_{Z_i})\Big)\cup \Sigma^2_1(T\times T, \cc_{T\times T})\cup \Sigma^0_1(T\times T, \cc_{T\times T}). 
\end{split}
\end{equation*}
When $j>1$, $\Sigma^{2-2j}_1(T\times T, \cc_{Z_i})=\emptyset$. $\Sigma^{0}_1(T\times T, \cc_{Z_i})=p_i^\star \m(B_i)$ for $1\leq i\leq 4$. Moreover, both $\Sigma^2_1(T\times T, \cc_{T\times T})$ and $\Sigma^0_1(T\times T, \cc_{T\times T})$ consist of the origin only. Hence, the claim follows. 
\end{proof}
Back to the proof of the proposition, suppose $X'$ satisfies the assumption in the proposition. Then $\Sigma^2_1(X')$ is the union of four arithmetic subvarieties $C_i$ corresponding to each $p_i^\star \m(B_i)$, $1\leq i\leq 4$. Notice $\pic^\tau(X')=\pic^0(X')$, since $\m(X)\cong \m(X')$ and $\m(X)$ is connected. Recall that as described in Corollary \ref{pq},  there is a canonical isomorphism between $\pic^0(X')$ and the group of unitary characters $\m^u(X')$. Under this identification, Hodge decomposition for unitary local systems implies 
$$\Sigma^{0,2}_1(X')\cup \Sigma^{1,1}_1(X')\cup \Sigma^{2,0}_1(X')=\Sigma^2_1(X')\cap \m^u(X').$$
Since $\Sigma^{p,q}_k(X')$ is always an analytic subvariety of $\pic^0(X')$ for any $p, q, k\in\nn$, $\pic^0(X')$ has four subvarieties corresponding to $C_i\cap \m^u(X')$, $1\leq i\leq 4$. According to the arguments in \cite{v}, the presence of these four subvarieties of $\pic^0(X')$ induces $\pic^0(X')\cong T^\vee\times T^\vee$ and $T^\vee$ has an endomorphism $\phi^\vee$ whose characteristic polynomial is equal to the characteristic polynomial of $f$. By Lemma \ref{voisin}, $T^\vee$ is not an abelian variety. Thus, $\pic^0(X')$ is not an abelian variety, and hence $X'$ is not projective. 
\end{proof}

\begin{rmk}
Since cohomology jump loci are homotopy invariant, Proposition \ref{example} implies any compact K\"ahler manifold that is of the same homotopy type as $X$ is not projective. In fact, this is still true for any compact K\"ahler manifold that is of the same real 2-homotopy type as $X$. This is because the germs of $\Sigma^2_1(X)$ at origin are determined only by the real 2-homotopy type of $X$ (see \cite{dp}). Knowing the germs at the origin is sufficient to recover the endomorphism in Lemma \ref{voisin}, which implies that $\pic^0(X')$ is not an abelian variety. 
\end{rmk}

\section*{Acknowledgements}
I would like to thank Donu Arapura, Nero Budur, Lizhen Qin and Christian Schnell for helpful discussions. I would also like to thank the anonymous referee for pointing out a gap in an earlier version, and other useful comments.

\end{document}